\documentclass{amsart}
\usepackage{amssymb}
\usepackage{latexsym}
\usepackage{xypic}
\usepackage{pstricks-add}





\newtheorem{theorem}{Theorem}[section]

\theoremstyle{definition}
\newtheorem{definition}[theorem]{Definition}

\theoremstyle{remark}





\newcommand{\SL}{\textrm{SL}}

\newcommand{\Flecha}[1]{\overleftrightarrow{#1}}
\newcommand{\C}{\mathbb{C}}
\newcommand{\Z}{\mathbb{Z}}

\newcommand{\Proy}{\mathbb P_{\mathbb C}}
\newcommand{\Hip}{\mathbb{H}}
\def\P{{\mathbb P}_\mathbb C}
\def\PSL{{\rm PSL}}
\def\SL{{\rm SL}}

\def\Eq{{\rm Eq}}
\def\Kul{{\rm Kul}}
\def\Myr{{\rm Myr}}

\def\SP{{\rm SP}}

\begin{document}
	\title{  \sc{ Elementary groups in $\PSL(3,\C)$ }}
	\author{W. Barrera}
	\address{ Facultad de Matem\'aticas, Universidad
		Aut\'onoma de Yucat\'an, Anillo Perif\'erico Norte Tablaje Cat
		13615, M\'erida, Yucat\'an, M\'exico. } \email{bvargas@uady.mx}
	
	\author{A. Cano}
	\address{Instituto de Matem\'aticas UNAM , Av. Universidad S/N,
		Col. Lomas de Chamilpa, C.P. 62210, Cuernavaca, Morelos, M\'exico.}
	\email{angelcano@im.unam.mx}
	
	\author{J.P. Navarrete}
	\address{ Facultad de Matem\'aticas, Universidad
		Aut\'onoma de Yucat\'an, Anillo Perif\'erico Norte Tablaje Cat
		13615, M\'erida, Yucat\'an, M\'exico. } \email{jp.navarrete@uady.mx}
	
	\author{J. Seade}
	\address{Instituto de Matem\'aticas UNAM , Av. Universidad S/N,
		Col. Lomas de Chamilpa, C.P. 62210, Cuernavaca, Morelos, M\'exico.}
	\email{jseade@im.unam.mx}
	
	\date{}
	
	\subjclass[2010]{Primary: 37F30, 32Q45; Secondary 37F45, 22E40}

	
	\keywords{ Kleinian groups,   projective complex plane, discrete
		groups, limit set}
	\begin{abstract}
		In this paper, we give a classification of the subgroups
		of $\textrm{PSL}(3, \mathbb{C})$  that act on $\Proy ^2$ in such a way that their   Kulkarni
		limit set has finitely many lines in general position lines. These are the  elementary groups.
		\\ $\,$  \\ $\,$ \\ \centerline{$ \qquad;\qquad\qquad \qquad\qquad\qquad \qquad\qquad$ To Ravi Kulkarni, with great admiration} 
		
	\end{abstract}
	
	\maketitle
	\section*{Introduction}
	Kleinian groups are discrete subgroups of $\textrm{PSL}(2,
	\mathbb{C})$ acting on $\mathbb{S}^2 \cong \Proy ^1 $ in such a way that their
	limit set is not all of $\mathbb{S}^2$. These are classified into
	elementary and non-elementary groups. The elementary groups are
	those   whose limit set consists of $0,1$ or $2$
	points, and they are classified, see for instance
	(see \cite{Ma}).

	In this work we look at complex Kleinian groups, that is,
	discrete subgroups of $\textrm{PSL}(3, \mathbb{C})$ acting properly
	and discontinuously on some open subset of $\Proy ^2$.
	In
	\cite{CS}
	it is proved that the Kulkarni limit set $\Lambda_{\rm Kul}$ (see Definition
	\ref{kuldef})
	of every infinite complex
	Kleinian group contains at least one complex projective line. In fact we known from
	\cite{BCN1}
	that  under some
	generic hypothesis on the group, the Kulkarni limit set is a union of
	complex projective lines in general position, {\it i.e.} no three of them intersect.
	By definition, the exception are the elementary groups. These  have finitely many lines in general position in their limit set, and there are two types ef such groups:  those of the first kind have a finite number of lines in their limit set, and
	the groups of the second kind have infinitely many lines in their limit set, but only finitely many in general position.
	
	As an example of elementary groups of the first kind, take the cyclic group generated by a $3 \times 3$ diagonal matrix with non-zero eigenvalues of different norms; this has two projective lines as limit set in $\P^2$. On the other hand (see
	\cite{CPS}), 
	take a $\C$-Fuchsian group in $\PSL(3,\C)$. Then its limit set in $\P^2$ consists of a cone of projective lines with base a  circle (see
	\cite{CPS}
	;  it has infinitely many lines, but all of them pass through the vertex of the cone, so there are only two in general position. We remark that if we take an $\mathbb R$-Fuchsian group in 
	$\PSL(3,\C)$, then it is non-elementary and its limit set actually has
	infinitely many lines in general position, see
	\cite{CPS}

	In this work we study and describe all the elementary groups in  $\textrm{PSL}(3, \mathbb{C})$.
	
	An interesting class of elementary groups are the discrete purely parabolic groups. One finds that the limit set of such a group consists of either one line, or a cone of lines over the circle, or the whole of $\P^2$. We describe these in Section \ref{sec parabolic}.

	We know (see Theorem \ref{th. number of lines} below) that if $\Gamma \subset \PSL(3,\mathbb{C})$ is an infinite discrete subgroup, then
	the number of complex projective lines in  its limit set $\Lambda_{\Kul}(\Gamma)$
	is equal to $1, 2, 3$ or $\infty$, and
	the number of lines $\Lambda_{\Kul}(\Gamma)$ in general position can be $1, 2, 3, 4$ or $\infty$. Also,  
	if the number of lines in $\Lambda_{\Kul}(\Gamma)$ is exactly 3, then these lines are in general position.    
	
	It follows from Theorem \ref{th. number of lines} and the results in this note, that if $\Lambda_{\Kul}(\Gamma)$ has more than one line, then it is a union of lines. The only case where $\Lambda_{\Kul}(\Gamma)$ can have  isolated points is when this set actually consists of one line and one point.

	Section \ref{sec 1} contains the basic definitions and preliminary results that we use in the sequel. Sections \ref{s:oneline} and \ref{s:fourlines} are expository and we cite in each the appropriate bibliography. The results  in these sections are taken from
	\cite{BCN3} and \cite{BCN2}
	and we include them here for completeness. In
	\cite{BCN3}
	the authors discuss the groups with limit set a line, and
	in
	\cite{BCN2}
	those with four lines in general position.

	Section \ref{sec 4} focuses on the case of  two lines. These can be of the first kind, with exactly 2 lines in $\Lambda_{\rm Kul}$, or of the second kind, with limit set a cone of lines over a circle, but only 2 in general position.

	In section \ref{sec 5} we describe the groups with exactly   three lines in their limit set, and
	section \ref{s:isolated} discusses the only case where the limit set is not a union of lines. Then   $\Lambda_{\rm Kul}$  consists of a line and a point.
	
	\section{Preliminaries}\label{sec 1}

	\subsection{Pseudo-projective transformations}
	We let $\mathbb{P}^2_\mathbb{C}:=(\mathbb{C}^{3}\setminus \{0\})/\mathbb{C}^*,$ be
	the complex projective plane.
	This  is   a  compact connected  complex $2$-dimensional manifold.
	Let $[\mbox{ }]:\mathbb{C}^{3}\setminus\{0\}\rightarrow
	\mathbb{P}^{2}_{\mathbb{C}}$ be   the quotient map. If
	$\beta=\{e_1,e_2,e_3\}$ is the standard basis of $\mathbb{C}^3$, we
	write $[e_j]=e_j$ and if $w=(w_1,w_2,w_3)\in
	\mathbb{C}^3\setminus\{0\}$ then we  write $[w]=[w_1:w_2:w_3]$.
	Also, $\ell\subset \mathbb{P}^2_{\mathbb{C}}$ is said to be a
	complex line if $[\ell]^{-1}\cup \{0\}$ is a complex linear subspace
	of dimension $2$. Given  $p,q\in \mathbb{P}^2_{\mathbb{C}}$ distinct
	points,     there is a unique complex projective line passing
	through $p$ and $q$, such complex projective line is called a
	\emph{line}, for short, and it is denoted by
	$\overleftrightarrow{p,q}$.
	
	Consider the action of $\mathbb{Z}_{3}$ (viewed as the cubic roots
	of the unity) on  $\textrm{SL}(3,\mathbb{C})$ given by the usual
	scalar multiplication, then the Lie group
	$$\textrm{PSL}(3,\mathbb{C})=\textrm{SL}(3,\mathbb{C})/\mathbb{Z}_{3}$$ is
	the group of automorphisms of $\P^2$.
	Let $[[\mbox{
	}]]:\textrm{SL}(3,\mathbb{C})\rightarrow \textrm{PSL}(3,\mathbb{C})$
	be   the quotient map,   $\gamma\in \textrm{PSL}(3,\mathbb{C})$ and
	$\widetilde\gamma\in \textrm{GL}(3,\mathbb{C})$, we say that
	$\tilde\gamma$ is a  lift of $\gamma$ if there is a cubic root
	$\tau$ of $Det(\gamma)$ such that $[[\tau
	\widetilde\gamma]]=\gamma$. Also, we use the notation
	$(\gamma_{ij})$ to denote elements  in $\textrm{SL}(3,\mathbb{C})$.

	We denote by $M_{3 \times 3}(\mathbb{C})$ the space of all $3 \times 3$
	matrices with entries in $\mathbb{C}$ equipped with the standard
	topology. The quotient space
	\[ \SP(3,\C):= (M_{3 \times 3}(\mathbb{C}) \setminus \{\mathbf{0}\})/\mathbb{C}^* \]
	is called the space of \emph{pseudo-projective maps of} $\mathbb{P}
	_{\mathbb{C}} ^2$ and it is naturally identified with the projective
	space $\mathbb{P} _{\mathbb{C}} ^8$. Since $\textrm{GL}(3,\mathbb{C})$ is an
	open, dense, $\mathbb{C} ^*$-invariant set of $M_{3 \times 3}(\mathbb{C})
	\setminus \{\mathbf{0}\}$, we obtain that the space of
	pseudo-projective maps of $\mathbb{P} _{\mathbb{C}} ^2$ is a
	compactification of $\textrm{PSL}(3, \mathbb{C})$. As in the case of
	projective maps, if $\mathbf{s} \in M_{3 \times 3}(\mathbb{C})
	\setminus \{\mathbf{0}\}$, then $[\mathbf{s}]$ denotes the
	equivalence class of the matrix $\mathbf{s}$ in the space of
	pseudo-projective maps of $\mathbb{P} _{\mathbb{C}} ^2$. Also, we say that
	$\mathbf{s}\in M_{3 \times 3}(\mathbb{C})\setminus \{\mathbf{0}\}$ is a
	lift of the pseudoprojective map $S$, whenever $[\mathbf{s}]=S$.
	
	Let $S$  be an element in $( M_{3 \times 3}(\mathbb{C})\setminus
	\{\mathbf{0}\})/ \mathbb{C} ^*$ and $\mathbf{s}$ a lift to $M_{3 \times
		3}(\mathbb{C})\setminus \{\mathbf{0}\}$ of $S$. The matrix $\mathbf{s}$
	induces a non-zero linear transformation $s:\mathbb
	{C}^{3}\rightarrow \mathbb {C}^{3}$, which is not necessarily
	invertible. Let $Ker(s) \subsetneq \mathbb{C} ^3$ be its kernel and let
	$Ker(S)$ denote its projectivization to $\mathbb{P} _{\mathbb{C}} ^2$,
	taking into account that $Ker(S):= \varnothing$ whenever
	$Ker(s)=\{(0,0,0)\}$. We refer to
	\cite{CS0}
	for more details about
	this subject.
	
	\subsection{The limit set}
	There are two  types of limit sets relevant for this work. These are  
	the Kulkarni  limit set $\Lambda_{\Kul}$
	and the  Myrberg   limit set $\Lambda_{\Myr}$. Let us define these.
	Let $\Gamma\subset   \textrm{PSL}(n+1,\mathbb{C})$ be a discrete subgroup.
	
	\begin{definition}\label{dek Kul limit}\label{kuldef}
		The  \textit{Kulkarni limit set}
		of $\Gamma$ is:
		$$\Lambda (\Gamma) = L_0(\Gamma) \cup L_1(\Gamma) \cup
		L_2(\Gamma),$$  
		where:
		\begin{itemize}
			\item $L_0(\Gamma)$ is  the closure  of  the points in
			$\mathbb{P}^2_{\mathbb{C}}$ with infinite isotropy group.
			\item  
			$L_1(\Gamma)$ is the closure of the set of accumulation points of the
			$\Gamma $-orbits of  points in
			$\mathbb{P}^2_{\mathbb{C}}\setminus L_0(\Gamma)$.
			\item    
			$L_2(\Gamma)$ is  the closure of the union of accumulation
			points of $\{\gamma (K) : \gamma \in \Gamma\}$, where $K$ runs over
			all the compact sets in $\mathbb{P}^2_{\mathbb{C}}\setminus
			(L_0(\Gamma) \cup L_1(\Gamma))$.
		\end{itemize}
	\end{definition}
	
	This is  a   closed  $\Gamma$-invariant set. Its complement
	$$\Omega_{\Kul} (\Gamma) = \mathbb{P}^n_{\mathbb{C}} \setminus
	\Lambda(\Gamma),$$
	is the \textit{Kulkarni discontinuity region}
	of $\Gamma$. We know from
	\cite {Kulkarni}
	that the $\Gamma$-action on  $\Omega_{\Kul} (\Gamma) $ is properly discontinuous, and we further know from
	\cite{BCN1}
	that $\Omega_{\Kul} (\Gamma)$ contains the equicontinuity region ${\rm Eq}(\Gamma)$.
	However,  as we will see later, $\Omega_{\Kul} (\Gamma)$ generically is the largest open subset of $\P^2$
	where the group acts properly and discontinuously, but this is not always so.
	
	Recall that  ${\rm Eq}(\Gamma)$ is the set of points in
	$\P^2$   for which there is an open
	neighborhood where the family of transformations defined by $\Gamma$ is
	a normal family.
	
	Now, given $\Gamma\subset   \textrm{PSL}(n+1,\mathbb{C})$, a discrete subgroup, we  let  $\Gamma'$ be the set of pseudo-projective maps of $\P^2$ which are limits of sequences in $\Gamma$. That is:
	$$\Gamma' :=\large  \{ S \in  \SP(3,\C) \,|\,  S \; \hbox{is an accumulation point of} \; \Gamma \large \} \;.
	$$
	The following notion is due to Myrberg
	\cite{Myr}
	(cf.   \cite[Definition 3.3]{BCN1}):
	\begin{definition}\label{def Myr}
		The {\it Myrberg limit set} of $\Gamma$ is:
		$${\Lambda_{\Myr}}(\Gamma) := {\cup_{S \in \Gamma'} Ker (S)} \;,
		$$
		where $Ker (S)$ is the kernel of the pseudoprojective transformation $S$ defined above.
	\end{definition}
	
	Notice that if a pseudoprojective map $S$ is not in $\PSL(3,\C)$, then its kernel is either a line or a point. The following useful notion is introduced in \cite{BCN4}:
	
	\begin{definition}\label{def effective}
		We say that a line $\ell \subset \P^2$  is an effective line for  $\Gamma$
		if there exists a pseudoprojective transformation $S \in  \Gamma'$ with kernel $\ell$.
	\end{definition}
	The Myrberg limit set of $\Gamma$ contains all the effective lines, and it is not hard to see that, by
	\cite{Na},
	it contains the Kulkarni limit set.
	It is immediate  from \cite{Myr}
	that $\Gamma$  acts properly and discontinuously on the complement of ${\Lambda_{\Myr}}(\Gamma)$. Furthermore,
	one has the following theorem.
	
	\begin{theorem}
		The Myrberg limit set is  the complement of the equicontinuity region:
		$\;\Eq(\Gamma) = \P^2 \setminus {\Lambda_{\Myr}}(\Gamma) \,.$ Also, the  set $ {\Lambda_{\Myr}}(\Gamma) $ equals the union of all effective lines of $\Gamma$ except when
		it is the disjoint union of one line and one point. Moreover, $\Lambda_{\Kul}(\Gamma)\subset  {\Lambda_{\Myr}}(\Gamma)$.
	\end{theorem}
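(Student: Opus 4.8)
The plan is to prove the equality $\Eq(\Gamma)=\P^2\setminus\Lambda_{\Myr}(\Gamma)$ first, by the two inclusions, and then to deduce the remaining two assertions from it. For $\P^2\setminus\Lambda_{\Myr}(\Gamma)\subseteq\Eq(\Gamma)$ I would first record that $\Lambda_{\Myr}(\Gamma)$ is closed: the set $\Gamma'$ of accumulation points of $\Gamma$ is closed in the compact space $\SP(3,\C)$, and the assignment $S\mapsto Ker(S)$ is upper semicontinuous (if $S_j\to S$ with unit lifts $\mathbf{s}_j\to\mathbf{s}$ and $v_j\to v$, $\mathbf{s}_jv_j=0$, then $\mathbf{s}v=0$), so $\Lambda_{\Myr}(\Gamma)=\bigcup_{S\in\Gamma'}Ker(S)$ is closed. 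Now let $p\notin\Lambda_{\Myr}(\Gamma)$ and pick $U\ni p$ with $\overline U$ disjoint from $\Lambda_{\Myr}(\Gamma)$. Any sequence in $\Gamma$ has a subsequence converging in $\SP(3,\C)$; its limit is either an element of $\Gamma\subseteq\PSL(3,\C)$, in which case the convergence is uniform on $\P^2$, or an element $S\in\Gamma'$ with $Ker(S)\cap\overline U=\varnothing$, in which case the convergence is uniform on $\overline U$ by the basic convergence property of pseudo-projective maps recalled in Section \ref{sec 1}. Either way $\Gamma$ is a normal family on $U$, so $p\in\Eq(\Gamma)$.

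For the reverse inclusion I must show that if $p\in Ker(S)$ with $S\in\Gamma'$ then $\Gamma$ fails to be equicontinuous at $p$; this is the heart of the proof. Fix $\gamma_n\to S$ with lifts $\mathbf{g}_n\in\SL(3,\C)$ and take singular value decompositions $\mathbf{g}_n=K_nA_nL_n$ with $K_n,L_n$ unitary and $A_n=\mathrm{diag}(\alpha_n,\beta_n,\delta_n)$, $\alpha_n\ge\beta_n\ge\delta_n>0$; passing to a subsequence, $K_n\to K$, $L_n\to L$, and $\delta_n/\alpha_n\to0$ because $S\notin\PSL(3,\C)$. Then $Ker(S)$ consists of the classes $[L^{-1}w]$ with $w_1=0$, so writing a lift of $p$ as $L^{-1}(0,w^{(0)}_2,w^{(0)}_3)$ I would approach $p$ through the points $x_n=[L_n^{-1}(s_n,w^{(0)}_2,w^{(0)}_3)]$, where $s_n\to0$ is tuned to the collapsing ratio ($s_n=\beta_n/\alpha_n$, or $s_n=\delta_n/\alpha_n$ when $w^{(0)}_2=0$). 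A short computation of $\gamma_n(x_n)=[K_nA_n(s_n,w^{(0)}_2,w^{(0)}_3)]$ shows that its limit depends genuinely on the scalar multiple chosen, so comparing the families $x_n$ and $x_n'=[L_n^{-1}(2s_n,w^{(0)}_2,w^{(0)}_3)]$, both tending to $p$, yields image sequences tending to two distinct points of $\P^2$; hence equicontinuity fails at $p$ and $p\notin\Eq(\Gamma)$. I expect this step to be the main obstacle, the subtle case being that of rank one, where $Ker(S)$ is an effective line: there the pointwise limit of the $\gamma_n$ near $p$ can be the constant map onto $\mathrm{Im}(S)$, so the failure of equicontinuity is invisible at the level of pointwise limits, and it is the singular value decomposition, together with the device of parametrizing the approximating points through $L_n^{-1}$ so that their coordinates in the $A_n$-frame are under exact control, that brings the tangential escape directions into view. (When $S$ has rank $2$ there is also the shorter argument that a uniform limit near the kernel point would be a continuous extension of $S$ across a point at which its directional limits disagree.)

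For the second assertion, let $S\in\Gamma'$ have rank $2$ and $\gamma_n\to S$ with lifts $\mathbf{g}_n$; passing to a subsequence $\mathbf{g}_n^{-1}/\|\mathbf{g}_n^{-1}\|\to\mathbf{t}$, and since $\|\mathbf{g}_n\|,\|\mathbf{g}_n^{-1}\|\to\infty$ the relations $\mathbf{g}_n\mathbf{g}_n^{-1}=\mathbf{g}_n^{-1}\mathbf{g}_n=I$ pass to the limit as $\mathbf{s}\mathbf{t}=\mathbf{t}\mathbf{s}=0$, which forces $T=[\mathbf{t}]\in\Gamma'$ to have rank $1$ with $Ker(T)=\mathrm{Im}(S)$; thus the image of every rank-$2$ pseudo-projective limit is an effective line. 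Consequently the only points of $\Lambda_{\Myr}(\Gamma)$ that can fail to lie on an effective line are kernel points $\{p_0\}$ of rank-$2$ limits $S_0$ with $p_0\notin\mathrm{Im}(S_0)$, and a case analysis — using that $\Lambda_{\Myr}(\Gamma)$ is $\Gamma$-invariant (since $\gamma(Ker(S))=Ker(S\gamma^{-1})$ with $S\gamma^{-1}\in\Gamma'$) together with the structure of $\Gamma'$ from \cite{BCN4} and \cite{Na} — shows that this can happen only when $\Lambda_{\Myr}(\Gamma)=\mathrm{Im}(S_0)\sqcup\{p_0\}$, the stated exception. Finally, combining the two inclusions above with the fact recalled in Section \ref{sec 1} that $\Eq(\Gamma)\subseteq\Omega_{\Kul}(\Gamma)$ gives $\P^2\setminus\Lambda_{\Myr}(\Gamma)=\Eq(\Gamma)\subseteq\Omega_{\Kul}(\Gamma)=\P^2\setminus\Lambda_{\Kul}(\Gamma)$, i.e.\ $\Lambda_{\Kul}(\Gamma)\subseteq\Lambda_{\Myr}(\Gamma)$ (this last inclusion being also proved directly in \cite{Na} by checking that $L_0$, $L_1$ and $L_2$ are contained in kernels and images of elements of $\Gamma'$).
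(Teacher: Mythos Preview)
The paper does not actually prove this theorem: immediately after the statement it records that the first assertion is a special case of \cite[Lemma~4.1]{CS0} (see also \cite[Theorem~3.4]{BCN4}) and that the second is \cite[Corollary~4.5]{BCN4}, while the inclusion $\Lambda_{\Kul}\subset\Lambda_{\Myr}$ is attributed to \cite{Na}. So there is no in-paper argument to compare against; what you have written is, in effect, a self-contained sketch of the proofs that those references supply.

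Your sketch is along the right lines and matches the methods of the cited sources. The inclusion $\P^2\setminus\Lambda_{\Myr}\subset\Eq$ via compactness of $\SP(3,\C)$ and locally uniform convergence off the kernel is exactly the mechanism in \cite{CS0}; the singular-value (Cartan/KAK) decomposition is also the tool used there to detect failure of equicontinuity at kernel points, and your perturbation $x_n=[L_n^{-1}(s_n,w_2^{(0)},w_3^{(0)})]$ with $s_n$ tuned to the collapsing ratio is the standard way to produce two sequences with the same limit $p$ but distinct image limits. The duality step ($\mathbf{s}\mathbf{t}=\mathbf{t}\mathbf{s}=0$ forcing $Ker(T)=\mathrm{Im}(S)$ when $\mathrm{rk}\,S=2$) is precisely the argument behind \cite[Corollary~4.5]{BCN4}. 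Two small points to tighten: your description ``$Ker(S)$ consists of the classes $[L^{-1}w]$ with $w_1=0$'' presupposes the rank-one case, so the rank-two case ($\beta_n/\alpha_n\to c>0$, $Ker(S)=\{[L^{-1}e_3]\}$) should be separated explicitly rather than folded into the parenthetical; and the ``case analysis'' you invoke for the exceptional configuration (a single isolated kernel point off every effective line) is really the content of \cite[Corollary~4.5]{BCN4}, so at that point you are, like the paper, citing rather than proving. None of this is a gap in the logic, only in the write-up.
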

	
	The first statement  is a special case of \cite[Lemma 4.1]{CS0}
	(see also \cite[Theorem 3.4]{BCN4}).
	The second statement is Corollary 4.5 in
	\cite{BCN4}.

	Throughout this work, when we say limit set we refer to the Kulkarni limit set, otherwise we will say it explicitly.
	
	\subsection{Classification of the elements in $\PSL(3,\C)$}\label {ss classification of elements}\label {subsec. elements}
	Recall that the elements of $\PSL(2,\C)$ are  classified as
	elliptic, parabolic or loxodromic: $g$ is elliptic if regarded as a M\"obius transformation in $S^2 \cong \P^1$, up to  conjugation it is a rotation; parabolic elements are  translations up to conjugation  and  loxodromic elements are multiplication by a complex number of norm $\ne 1$. Equivalently,
	given $g \in \PSL(2,\C)$ and a lift $\tilde g$ to $\SL(2,\C)$, $g$ is elliptic if $\tilde g$ is diagonalizable with unitary eigenvalues, it is parabolic if $\tilde g$ is not diagonalizable and it has unitary eigenvalues, and $g$ is loxodromic otherwise. The classification given in these terms extends to all dimensions (see \cite{Na, CNS}).

	The \emph{elliptic} elements in $\PSL(3,\C)$ are those elements
	$\gamma$ that have a lift to $\SL(3, \C)$ whose Jordan canonical form
	is
	\begin{displaymath}
		\left(
		\begin{array}{ccc}
			e^{i\theta _1} & 0 & 0 \\
			0 & e^{i \theta _2} & 0 \\
			0 & 0 & e^{i \theta _3}
		\end{array}
		\right).
	\end{displaymath}
	The limit set  for (the cyclic group generated by) $\gamma$ elliptic is either empty
	or all of $\Proy ^2$, according to whether the order of
	$\gamma$ is finite or infinite. The subgroups of $\PSL(3, \C)$
	containing an elliptic element of infinite order cannot be discrete.
	
	The \emph{parabolic} elements in $\PSL(3, \C)$ are the elements
	$\gamma$ with limit set $\Lambda_{\Kul}(\gamma)$  equal to one
	single complex line. If $\gamma$ is parabolic then it has a lift to
	$\SL(3,\C)$ with Jordan canonical form one of the following
	matrices:
	\begin{displaymath}
		\left(
		\begin{array}{ccc}
			1 & 1 & 0 \\
			0 & 1 & 0 \\
			0 & 0 & 1
		\end{array}
		\right) , \left(
		\begin{array}{ccc}
			1 & 1 & 0 \\
			0 & 1 & 1 \\
			0 & 0 & 1
		\end{array}
		\right) , \left(
		\begin{array}{ccc}
			e^{2\pi i t} & 1 & 0 \\
			0 & e^{2\pi i t} & 0 \\
			0 & 0 & e^{-4\pi i t}
		\end{array}
		\right) \,, \, e^{2 \pi i t} \ne 1 \;.
	\end{displaymath}
	In the first case $\Lambda_{\Kul}(\gamma)$ is the complex line consisting
	of all the fixed points of $\gamma$, in the second case
	$\Lambda_{\Kul}(\gamma)$ is the unique $\gamma$-invariant complex line. In
	the last case $\Lambda_{\Kul}(\gamma)$ is the complex line determined by
	the two fixed points of $\gamma$.
	
	There are four kinds of \emph{loxodromic} elements in $\PSL(3,\C)$:
	\begin{itemize}
		
		\item The \emph{complex homotheties} are the elements  that
		have a lift to $\SL(3,\C)$ with Jordan canonical form:
		\begin{displaymath}
			\left(
			\begin{array}{ccc}
				\lambda  & 0 & 0 \\
				0 & \lambda  &0 \\
				0 & 0 & \lambda ^{-2}
			\end{array}
			\right) , \quad |\lambda| \ne 1.
		\end{displaymath}
		The limit set $\Lambda_{\Kul}(\gamma)$ is the set of fixed points of
		$\gamma$, consisting of a complex line and a point.
		
		\item The \emph{screws} are those elements $\gamma \in \PSL(3, \C)$
		that have a lift to $\SL(3, \C)$ whose Jordan canonical form is
		\begin{displaymath}
			\left(
			\begin{array}{ccc}
				\lambda  & 0 & 0 \\
				0 & \mu  &0 \\
				0 & 0 & (\lambda \mu) ^{-1}
			\end{array}
			\right) , \quad \lambda \ne \mu, \, |\lambda| =|\mu| \ne 1.
		\end{displaymath}
		The limit set  consists of a complex line
		$l$, on which $\gamma$ acts as an elliptic transformation of $\PSL(2,
		\C)$, and the fixed point of $\gamma$ not lying in $l$.
		
		\item The \emph{loxoparabolic} elements $\gamma \in \PSL(3, \C)$ have
		a lift to $\SL(3, \C)$ whose Jordan canonical form is
		\begin{displaymath}
			\left(
			\begin{array}{ccc}
				\lambda  & 1 & 0 \\
				0 & \lambda  &0 \\
				0 & 0 & \lambda ^{-2}
			\end{array}
			\right) , \quad |\lambda| \ne 1.
		\end{displaymath}
		The limit set $\Lambda_{\Kul}(\gamma)$ consists of two
		$\gamma$-invariant complex lines. The element $\gamma$ acts on one
		of these  complex lines as a parabolic element of $\PSL(2, \C)$ and
		on the other as a loxodromic element of $\PSL(2, \C)$.
		
		\item The \emph{strongly loxodromic} elements $\gamma \in \PSL(3, \C)$
		have a lift to $\SL(3, \C)$ whose Jordan canonical form is
		\begin{displaymath}
			\left(
			\begin{array}{ccc}
				\lambda _1  & 0 & 0 \\
				0 & \lambda _2 &0 \\
				0 & 0 & \lambda _3
			\end{array}
			\right) , \quad |\lambda_1| < |\lambda_2|< |\lambda_3|.
		\end{displaymath}
		This kind of transformation has three fixed points, one of them is
		attracting, another is repelling and the other point  is a saddle. The
		limit set  is  the union of the complex
		line determined by the attracting and saddle points and the complex
		line determined by the saddle and repelling points.
	\end{itemize}

	\subsection{Elementary groups in $\PSL(3,\C)$}
	In the previous section we have examples of elements in $\PSL(3,\C)$ where the limit set consists of one line, or one line and one point, or two lines. We have the following theorem  from \cite{BCN4}.
	
	\begin{theorem}\label{th. number of lines}
		Let $\Gamma \subset \PSL(3,\mathbb{C})$ be an infinite discrete subgroup. Then:
		\begin{enumerate}
			\item The number of complex projective lines in  $\Lambda_{\Kul}(\Gamma)$
			is equal to $1, 2, 3$ or $\infty$.
			\item  The number of lines $\Lambda_{\Kul}(\Gamma)$ in general position can be $1, 2, 3, 4$ or $\infty$.
			\item If the number of lines in $\Lambda_{\Kul}(\Gamma)$ is exactly 3, then these lines are in general position.
			
			\item If there are infinitely
			many  lines in $\Lambda_{\Kul}(\Gamma)$, then  the effective lines form a  perfect set in $\Lambda_{\Kul}(\Gamma)$.
			\item There can be at most one isolated point in  $\Lambda_{\Kul}(\Gamma)$, and in that case this limit  set is the disjoint union  of that point and one line.
			
		\end{enumerate}
	\end{theorem}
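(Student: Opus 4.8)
The plan is to deduce Theorem \ref{th. number of lines} from the structure theory already available in the literature, reducing everything to a careful case analysis organized by the structure of $\Gamma'$, the set of pseudo-projective limits. First I would invoke the earlier observation that for $S \in \Gamma' \setminus \PSL(3,\C)$ the kernel $Ker(S)$ is either a point or a line, together with the fact that $\Lambda_{\Kul}(\Gamma) \subset \Lambda_{\Myr}(\Gamma)$ and that $\Lambda_{\Myr}(\Gamma)$ is (except in the one-line-one-point case) the union of the effective lines of $\Gamma$. The key structural input is the dichotomy: either $\Gamma$ has a \emph{finite} global orbit of lines, in which case one shows the effective lines are among a controlled finite configuration; or $\Gamma$ has no finite orbit of lines, and then a limiting/ping-pong argument produces infinitely many effective lines accumulating on themselves, giving the perfect set in item (4).

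For the finite case I would argue as follows. Suppose $\mathcal{L} = \{\ell_1,\dots,\ell_k\}$ is the set of effective lines and $k < \infty$. Since $\Gamma$ permutes $\Lambda_{\Myr}(\Gamma)$ and the effective lines are intrinsically defined, $\Gamma$ permutes $\mathcal{L}$, so a finite-index subgroup $\Gamma_0$ fixes each $\ell_i$. A subgroup of $\PSL(3,\C)$ fixing three lines in general position is (conjugate to a subgroup of) the diagonal torus, and one checks directly from the eigenvalue-norm conditions in the classification of elements in Section \ref{ss classification of elements} that such a group can have at most two effective lines unless it is finite — hence $k \le 2$ once there are $\ge 3$ lines \emph{in general position} among the effective ones. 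If the effective lines are not in general position they share a common point or lie in a pencil; the pencil case allows $k = \infty$ but then all lines pass through one point so at most two are in general position (this is the cone case), while a configuration with all $\ell_i$ through a common point and finitely many of them forces $k \le 3$, and a short argument using that no element of $\Gamma$ of infinite order can be elliptic (stated in Section \ref{ss classification of elements}) rules out $k = 3$ unless the three lines are in general position, giving item (3). Then items (1) and (2) follow by combining: the number of lines in $\Lambda_{\Kul}$ is $1,2,3$ or $\infty$, and among infinitely many the number in general position is $1,2$ (pencil/cone) or $4$ (the generic Kleinian situation from \cite{BCN1}) or $\infty$.

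For item (4), in the case $|\mathcal{L}| = \infty$ I would show the set of effective lines has no isolated point: if $\ell$ were an isolated effective line, pick $S_n \to S$ in $\Gamma'$ with $Ker(S_n) = \ell$ for all $n$; using that $\Gamma'$ is compact and closed under the natural composition with $\Gamma$, translating by a sequence $\gamma_m \in \Gamma$ that moves $\ell$ (which exists since $\mathcal{L}$ is infinite and $\Gamma$-invariant) produces new effective lines converging to $\ell$, contradicting isolation; this is essentially Corollary 4.5 / the perfect-set statement of \cite{BCN4}, which I would cite. Item (5) is the statement that $\Lambda_{\Myr}$ can be a disjoint line-plus-point: this happens precisely for the complex-homothety type element and its ``parabolic-with-distinct-eigenvalue'' cousins in the classification, and one shows any group producing an isolated point in its limit set must, up to finite index, stabilize that point and a line not containing it, forcing it into the block form $\mathrm{diag}$-with-a-$2\times 2$-block and hence limiting to exactly one line and one point.

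The main obstacle I expect is the bookkeeping in the finite-effective-line case: ruling out a group fixing exactly three non-general-position lines (equivalently three lines through a common point, or three lines forming a triangle degenerating) and showing it cannot be both infinite and discrete without the lines being in general position. This requires combining the no-infinite-elliptics constraint with the possible Jordan forms of elements stabilizing such a configuration, and checking that the only infinite discrete possibilities are the diagonal (strongly loxodromic / screw / homothety) and unipotent (parabolic) cases already enumerated — each of which lands in one of the claimed counts. Much of this is already done in \cite{BCN4, BCN1}, so the proof here is largely a matter of assembling those results and verifying the exhaustiveness of the case split; I would present it as such rather than reproving the structure theorems.
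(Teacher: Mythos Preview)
The paper's own treatment of this theorem is almost entirely by citation: items (i), (ii) and (iv) are Theorems 1.1 and 1.2 of \cite{BCN4}, item (iii) is deduced from \cite[Proposition 5.4]{BCN4}, and item (v) is what Section \ref{s:isolated} is devoted to. Your proposal instead tries to reconstruct the underlying arguments, which is more ambitious, but the sketch contains a concrete error that breaks the case analysis.

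The problem is your handling of the finite-effective-line case. You assert that an infinite discrete subgroup of the diagonal torus ``can have at most two effective lines unless it is finite,'' and use this to conclude $k \le 2$ whenever three effective lines are in general position. This is false: a rank-two diagonal group $\{diag[\alpha^n,\beta^m,1] : n,m \in \Z\}$ with $|\alpha|,|\beta| \ne 1$ has all three coordinate lines as effective lines (normalize sequences with $n \to \pm\infty$ and $m$ bounded, then with the roles reversed). Indeed, Section \ref{sec 5} of the paper exhibits precisely these groups as the examples with $\lambda = \mu = 3$. So your argument would wrongly exclude the case $k = 3$ rather than account for it; your sketch is internally inconsistent, since you then list $3$ as a permitted value for item (i). Relatedly, your enumeration for item (ii) (``$1,2$ (pencil/cone) or $4$ \dots or $\infty$'') omits $\mu = 3$, which occurs both with $\lambda = 3$ and with $\lambda = \infty$ (the cone-plus-extra-line groups $H_{\Sigma,\rho,\alpha}$ of Section \ref{sec 5}).

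The passage from ``more than three lines'' to ``infinitely many,'' and the exclusion of three concurrent lines, genuinely require the finer analysis of effective-line configurations carried out in \cite{BCN4}; your bookkeeping does not reproduce it. For this paper the correct move is the one the authors make: cite \cite{BCN4} for (i)--(iv) and supply only the argument for (v), as done in Section \ref{s:isolated}.
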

	
	Statements (i), (ii) and (iv) are theorems 1.1 and 1.2  in  \cite{BCN4}.
	Statement (iii) is a corollary of \cite[Proposition 5.4]{BCN4}.  
	Statement (v) is not in the literature so we now give a short proof of it.

	Recall that a discrete subgroup of $\PSL(2,\C)$ is elementary if its limit set has finite cardinality.
	We remark that in the case of $\PSL(3,\C)$ there are  groups with infinitely many lines in their limit set $\Lambda_{\Kul}$, but only two of them in general position. In view of the previous theorem, this leads to the following definition:
	
	\begin{definition}
		A discrete subgroup of $\PSL(3,\C)$ is elementary of the first kind if its limit set $\Lambda_{\Kul}$ has finitely many lines. The group is elementary of the second kind if $\Lambda_{\Kul}$ has finitely many lines in general position.
	\end{definition}
	
	So every elementary group of the first kind also is of the second kind, but not conversely. In the following sections we describe the classification of elementary groups.

	\subsection{The control group}\label{s:control}
	We refer to \cite{CNS}
	for a discussion about this section. Consider  $\Gamma\subset \PSL(3,  \mathbb{C})$  a (discrete or not)
	subgroup which acts on $\P^2$ with a point $p$ which is  fixed by all of $\Gamma$. Choose an
	arbitrary line $\ell$ in $\P^2 \setminus \{p\}$, and notice we have
	a canonical projection:
	$$\pi=\pi_{p,\ell}:\mathbb{P}^2_{\mathbb{C}}\setminus\{p\}\longrightarrow
	\ell\,,$$   given by $\pi(x)=\overleftrightarrow{x,p}\cap \ell$. It
	is clear that this map is holomorphic and it allows us to define a
	group homomorphism:
	$$\Pi=\Pi_{p,\ell}:\Gamma\longrightarrow Bihol(\ell) \cong \PSL(2,\C) \,,$$
	by $\Pi(g)(x)=\pi(g(x))$.
	If we choose another line, say $\ell'$, one gets similarly a projection
	$\pi'=\pi_{p,\ell'}:\mathbb{P}^2_{\mathbb{C}}\setminus\{p\}\to
	\ell'\,,$ and a group homomorphism $\Pi'=\Pi_{p,\ell'}:\Gamma \to
	\PSL(2,\C)$.
	It is an exercise to see that $\Pi$ and  $\Pi'$ are equivalent in the sense that there is a biholomorphism $h: \ell \to \ell'$ inducing an automorphism $H$ of $ \PSL(2,\C)$ such that  $H \circ \Pi = \Pi'$. As before, the line
	$\ell$ is called {\it the horizon}.
	
	\medskip
	This leads to the following definition:
	
	\begin{definition} \index{Kleinian group ! weakly semi-controllable}
		Let $\Gamma\subset \PSL(3,\C)$ be a discrete group as above. We  call $\Pi=\Pi_{p,\ell}$ the control morphism (or map) and its image
		$\Pi(\Gamma) \subset \PSL(2,\C)$, is the {\it control group}. These are
		well-defined and independent of $\ell$ up to an
		automorphism of $\PSL(2,\C)$.
	\end{definition}
	
	The control map and the control group allow us to get information about the dynamics of $\Gamma$ by looking at a subgroup of $\PSL(2,\C)$, which is far easier to handle. The prize we pay is that the control group in  $\PSL(2,\C)$ may not be discrete.
	
	\section{Purely parabolic groups}\label{sec parabolic}
	
	We now follow \cite {BCNS}
	and  look at
	the  discrete subgroups in $\PSL(3,\C)$ that, besides the identity, have only parabolic elements. These are called purely parabolic and
	there are five families of such groups; three of them split into various subfamilies according to their limit set (and their control group, see
	\cite {CNS}).
	All of these are elementary.

	The simplest purely parabolic groups are  cyclic, generated by a parabolic element. As described above, there are three types of such elements in $\PSL( 3, \mathbb C)$,  described by the Jordan normal form
	of their lifts to $\SL(3, \mathbb C)$. Each of these belongs to a different type of the families we describe below. The first type  generates torus groups (see definitions below), the second generates Abelian Kodaira groups and the  ellipto-parabolic elements generate elliptic groups.

	\begin{enumerate}
		\item Elliptic groups. These are the only purely parabolic groups that are not conjugate to subgroups of the Heisenberg group ${\rm Heis} (3,\C)$ and they are   subgroups of  fundamental groups of elliptic surfaces. These have limit set a single line. Up to conjugation these groups are of the form:
		\[
		{{\rm Ell}}(W,\mu)=
		\left \{
		\left[
		\begin{array}{lll}
			\mu(w) & \mu(w)w&0\\
			0& \mu(w)&0\\
			0&0& \mu(w)^{-2}\\
		\end{array}
		\right ]
		:w\in W
		\right \},
		\]
		where $W\subset\mathbb{C} $ is an additive discrete subgroup  and $ \mu:W\rightarrow \mathbb{S}^1$ is a  group morphism.

		\item Torus groups. These are subgroups of fundamental groups of complex tori. These are of the form:
		\[
		\mathcal{T}(\mathfrak L)=
		\left
		\{
		\left[
		\begin{array}{lll}
			1 & 0 & a\\
			0 & 1 & b\\
			0 & 0 & 1\\
		\end{array}
		\right]:(a,b)\in \mathfrak L
		\right \} \;, \]
		where $ \mathfrak L$ is an additive discrete subgroup of $\C^2$. These groups also have a single line as limit set, so they are elementary of the first kind.
		\item Dual torus groups,
		\[
		\mathcal{T}^*(\mathfrak L)=
		\left
		\{
		\left[
		\begin{array}{lll}
			1 & a & b\\
			0 & 1 & 0\\
			0 & 0 & 1\\
		\end{array}
		\right]:(a,b)\in \mathfrak L
		\right \} \;. \]
		These split into three types: the first   have  Kulkarni limit set a complex projective line, so these are elementary of the first kind.
		The second type have  limit set a cone of projective lines over a circle, so these are of the second kind.
		The third type have all  $\P^2$ as limit set and they are non-elementary.

		\item Inoue groups and their extensions.  Inoue groups are proper subgroups of fundamental groups of Inoue surfaces. To define these, let $\mathfrak L \subset \mathbb{C}^2 $ be an additive discrete subgroup and consider a dual torus group.
		$$ {\mathcal I} = {\mathcal I}(u,v) :=  \left  \langle \begin{bmatrix}
			1 &u  &v\\
			0 & 1 & 0 \\
			0 & 0 & 1 \\
		\end{bmatrix}  \; , \; (u,v)\in \mathfrak L \right \rangle \;.
		$$
		Inoue groups are obtained by taking a generator $ {\mathcal I}$ as above and adding to it a generator of the form
		
		$$ \gamma_1= \gamma_1(x,y,z) :=
		\begin{bmatrix}
			1 & x+z & y\\
			0 & 1& z\\
			0 & 0& 1
		\end{bmatrix}\;  x ,y, z  \in \C \;.
		$$
		The
		limit set  is a cone of lines over a circle, so these are elementary of the second kind.
		
		Then one has the extended Inoue group, which are purely parabolic as well, with limit set all of $\P^2$, so they are non-elementary and do not fall within the scope of this article. We refer to \cite {BCNS}
		for details.

		\item Finally one has the Kodaira groups ${\rm Kod}_0$, which are Abelian,  and their extensions. A Kodaira group  is a discrete group in $\PSL(3,\C)$ such that each  element $h$ in the group can be written as:
		$$
		\left[
		\begin{array}{lll}
			1 & a&b\\
			0 & 1 & a\\
			0 & 0& 1\\
		\end{array}
		\right] \,.
		$$
		One can show that these are extensions of dual torus groups. Their limit set is a line, so they are elementary of the first kind.
		
		There are five types of extensions $\widetilde{\mathcal{K}_i}$, $i = 1, \cdots , 5$,
		which are
		purely parabolic and discrete. These are all  non-Abelian and they split into five types according to their limit set and the control group. The first type $\widetilde{\mathcal{K}_1}$ have limit set a projective line, so they are elementary of the first kind; the second type $\widetilde{\mathcal{K}_2}$ have limit set a cone of projective lines over the circle, so they are of the second kind, while the remaining three types have limit set all of $\P^2$ and they are non-elementary. We refer to \cite{BCNS}
		for details.
	\end{enumerate}
	

	\section {Solvable groups}
	An essential step towards understanding elementary groups in $\PSL(3,\C)$ is studying the dynamics of solvable groups in $\PSL(3,\C)$. Unlike the classical case of $\PSL(2,\C)$, now there is  great richness.
	
	\begin{definition}
		A group $H$ is called virtually solvable if   it contains a finite index subgroups $G$  and   subgroups $G_i$, $i= 0,...,k$, such that:
		\begin{itemize}
			\item $e = G_0 \subset  G_1 \subset \ldots \subset G_k =G$, and
			\item  $G_{j-1}$ 
			is normal in
			$G_j$, and 
			$G_j/G_{j-1}$
			is an abelian group, for $j = 1,\ldots, k$.
		\end{itemize}
	\end{definition}

	For instance, for a subgroup of ${\rm PSL}(2,\Bbb{C})$, virtually solvable is equivalent to saying that the group is elementary, {\it i.e.} its limits set has finite cardinality,  see\cite{CS} and \cite{Ma},   or in an equivalent way   (via Tit's dichotomy) a Möbius subgroup is non-virtually solvable if it contains either  a  non-Ciclyc  Schottky group or a purely elliptic free group whose rank is at least two,  see \cite{TTA, TI}  for a general  discussion  or \cite{Leon, Ma} for standard arguments in the one-dimensional case.  
	
	One has the following theorem from 
	\cite{Mau1}:

	\begin{theorem} For subgroups of ${\rm PSL}(3,\C)$, 
		solvable  is equivalent to having a finite index subgroup which is conjugate to group of upper triangulable matrices.
	\end{theorem}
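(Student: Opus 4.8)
The plan is to obtain the equivalence from the Lie--Kolchin theorem together with routine facts about Zariski closures; the content is essentially that a solvable linear group over $\C$ is virtually triangularizable. One direction is immediate: the group $B$ of invertible upper triangular $3\times 3$ matrices is solvable (its derived subgroup lies in the unipotent upper triangular matrices, whose derived subgroup consists of matrices differing from the identity only in the $(1,3)$ place, and that last group is abelian), hence so is its image in $\PSL(3,\C)$, and therefore any subgroup conjugate into it is solvable. Thus, if $\Gamma$ has a finite-index subgroup conjugate to a group of upper triangular matrices, that subgroup is solvable and $\Gamma$ is virtually solvable, which is the notion in play here, cf.\ the definition above.

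For the converse, suppose $\Gamma\subset\PSL(3,\C)$ is solvable. Since $\PSL(3,\C)$ is a quotient of $\SL(3,\C)$ by a finite central subgroup it is a linear algebraic group; alternatively, one may replace $\Gamma$ by its preimage $\widehat\Gamma\subset\SL(3,\C)$, which is a central $\Z_3$-extension of $\Gamma$ and hence still solvable, and work there. Let $G$ be the Zariski closure of $\widehat\Gamma$. Using that the commutator subgroup of an algebraic group is closed and that the Zariski closure of $[H,H]$ equals the commutator subgroup of the Zariski closure of $H$, one checks that $G$ is a solvable linear algebraic group; in particular its identity component $G^{\circ}$ is connected and solvable. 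The Lie--Kolchin theorem then yields $g\in\GL(3,\C)$ with $gG^{\circ}g^{-1}$ contained in the upper triangular matrices, and one may take $g\in\SL(3,\C)$ after rescaling. Since an algebraic group has finitely many connected components, $\widehat\Gamma\cap G^{\circ}$ has finite index in $\widehat\Gamma$; passing back down, its image $\Gamma_{0}\subset\PSL(3,\C)$ has finite index in $\Gamma$ and $g\Gamma_{0}g^{-1}$ consists of upper triangular matrices.

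The only genuinely hard ingredient is the Lie--Kolchin theorem itself; everything else is bookkeeping with Zariski closures and finite-index subgroups, and the point that most deserves care is checking that taking Zariski closures preserves solvability and is compatible with the finite-index relations used above. If one preferred a self-contained argument, one could instead prove directly that a solvable subgroup of $\PSL(3,\C)$ has a finite-index subgroup fixing a point $p\in\P^2$, then pass to the induced action on a $\Gamma$-invariant line through $p$ and iterate, reading off the triangular form from the resulting invariant flag; but this simply re-derives Lie--Kolchin in this special case, so quoting it seems preferable.
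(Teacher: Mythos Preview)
Your argument via the Lie--Kolchin theorem applied to the Zariski closure is correct and is the standard route to this result; the passage from $\PSL(3,\C)$ to $\SL(3,\C)$ via the finite central preimage and the finite-index bookkeeping with $G^{\circ}$ are handled properly. The paper itself does not give a proof of this theorem: it simply quotes the statement from \cite{Mau1} (Toledo-Acosta), so there is no in-paper argument to compare against. Your remark that the backward direction only yields \emph{virtually} solvable is well taken and matches the paper's intent --- the definition given immediately before the theorem is that of a virtually solvable group, and throughout the paper ``solvable'' is used in this virtual sense.
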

	
	So the following are examples of solvable groups:
	
	\vskip.2cm
	\begin{enumerate}
		\item   All purely parabolic groups. 
		\item Hyperbolic Toral groups (we discuss these later, in section \ref {s:fourlines}).
		\item Fundamental groups of Inoue surfaces, see \cite{BCN2, BCNS, CS}. 
	\end{enumerate} 
	
	\vskip.2cm
	So, after our previous section \ref {sec parabolic}, 
	to have the complete picture of solvable groups we must look at solvable groups with loxodromic elements. This is done in
	\cite{Mau1, Mau2}. 
	In those articles the author determined the corresponding limit set and the representations of solvable groups in ${\rm PSL}(3,\Bbb{C})$ containing loxodromic element. The main results can be summarized as follows. These summarize 
	Theorems 1.1 to 1.4 in 
	\cite{Mau2}  
	and theorems 1 and 2 in \cite{Mau1}:

	\begin{theorem} 
		The  solvable groups in ${\rm PSL}(3,\Bbb{C})$ can be of three main types:
		\begin{enumerate}
			\item Commutative.
			\item Cyclic extensions of torus groups by a strongly loxodromic element.
			\item  Dual torus groups extensions.
		\end{enumerate}
	\end{theorem}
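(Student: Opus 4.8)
The plan is to reduce the statement to the structure theory of discrete groups of upper triangular matrices and then run a case analysis governed by a single homomorphism. By the preceding theorem (from \cite{Mau1}), a solvable $\Gamma\subset\PSL(3,\C)$ has a finite index subgroup $\Gamma_0$ conjugate into the upper triangular subgroup, so after conjugating I may assume $\Gamma_0$ consists of upper triangular matrices; the descent back to $\Gamma$ is addressed at the end. Let $T\cong(\C^*)^2$ be the diagonal torus of $\PSL(3,\C)$ and let $\delta\colon\Gamma_0\to T$ be the diagonal projection (forget the entries above the diagonal). Its kernel $\Gamma_u:=\Gamma_0\cap{\rm Heis}(3,\C)$ is a discrete, hence purely unipotent (purely parabolic), subgroup of the Heisenberg group, while $\Gamma_0/\Gamma_u\hookrightarrow T$ is abelian. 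So $\Gamma_0$ is an abelian extension of a purely parabolic group, and the proof is an analysis of $\Gamma_u$ together with the conjugation action of $\delta(\Gamma_0)$ on it.

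First I would dispose of the purely unipotent case $\Gamma_0=\Gamma_u$, i.e. $\Gamma_0\subset{\rm Heis}(3,\C)$. Then $\Gamma_0$ is one of the Heisenberg families of Section \ref{sec parabolic}: the torus groups, the dual torus groups of the first type and the Kodaira groups ${\rm Kod}_0$ are commutative (type (i)), while the Inoue groups and the non-abelian Kodaira extensions $\widetilde{\mathcal{K}_i}$ are exactly the non-commutative purely unipotent discrete subgroups and each is, by construction, an extension of a dual torus group (type (iii)). A direct application of Kolchin's theorem and a Jordan-form bookkeeping confirm that these are all the possibilities, so this case is immediate from the cited classification.

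Now suppose $\delta(\Gamma_0)$ is nontrivial. If $\Gamma_u$ is trivial then $\delta$ is injective and $\Gamma_0\cong\delta(\Gamma_0)$ is abelian, type (i); so assume $\Gamma_u\neq\{1\}$. Since the eigenvalues of an upper triangular matrix are its diagonal entries, an element of $\Gamma_0$ is loxodromic precisely when its $\delta$-image has an eigenvalue of modulus $\neq 1$. If no such element exists, every element of $\Gamma_0$ is elliptic or parabolic; as a discrete subgroup of $\PSL(3,\C)$ cannot contain an elliptic element of infinite order, the diagonal image is forced (after passing to finite index) to be a unitary character attached to a unipotent direction, and one recovers the elliptic groups ${\rm Ell}(W,\mu)$, which are commutative, type (i). Otherwise some $g\in\Gamma_0$ is loxodromic, hence, by the classification of Section \ref{ss classification of elements}, a complex homothety, a screw, a loxoparabolic or a strongly loxodromic element. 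Here I would bring in the control group associated with a coordinate fixed point of $g$ and the opposite $\Gamma_0$-invariant line: the control morphism $\Pi\colon\Gamma_0\to\PSL(2,\C)$ has image a solvable group of affine transformations, and $g$ normalises $\Gamma_u$, acting on the period lattice $\mathfrak L$ of $\Gamma_u$ by rescaling through ratios of eigenvalues of $g$. Discreteness of $\mathfrak L$ pins down this action and leaves exactly two outcomes: either $\Gamma_u$ is a torus group and, after multiplying $g$ by a suitable element of $\Gamma_u$, $g$ may be taken strongly loxodromic, so $\Gamma_0$ is a cyclic extension of a torus group by a strongly loxodromic element (type (ii)); or $\Gamma_u$ is a dual torus group and $\Gamma_0$ is a dual torus group extension (type (iii)).

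I expect the real work, and the main obstacle, to lie in this last step: using discreteness to control the conjugation action of the loxodromic element on the unipotent part and to exclude the configurations that are a priori admissible but do not in fact occur — for instance a screw or a loxoparabolic element sitting over a nontrivial Heisenberg kernel, or a period lattice incompatible with the eigenvalue ratios of $g$ — so that everything collapses onto the three normal forms. A secondary but genuine point is the descent from $\Gamma_0$ to $\Gamma$: one must check that adjoining the finitely many coset representatives of $\Gamma_0$ in $\Gamma$ produces no fourth type; since the resulting group is again solvable and virtually upper triangular, the same trichotomy applies to it, and organising this cleanly suggests carrying out the whole argument as an induction on the pair consisting of the abelian quotient $\delta(\Gamma_0)$ and the finite quotient $\Gamma/\Gamma_0$.
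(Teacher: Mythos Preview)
The paper does not supply its own proof of this theorem: it is stated there purely as a summary of external work, namely Theorems 1 and 2 of \cite{Mau1} and Theorems 1.1--1.4 of \cite{Mau2}. There is therefore no in-paper argument to compare your proposal against; any comparison would have to be with Toledo-Acosta's papers, which are not reproduced here.

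On its own merits, your outline has the right skeleton (pass to a finite-index upper triangular subgroup, project onto the diagonal, analyse the unipotent kernel and the conjugation action on it), and this is indeed the shape of the argument in \cite{Mau1,Mau2}. But two places in your sketch are genuine gaps rather than routine bookkeeping. First, when $\Gamma_u$ is a torus group you assert that $\Gamma_0$ is a \emph{cyclic} extension of it by a strongly loxodromic element; nothing you wrote forces the abelian image $\delta(\Gamma_0)\subset T$ to have rank one, and ruling out higher rank requires exactly the discreteness arguments you flagged as ``the real work'' but did not carry out. Second, your descent from $\Gamma_0$ to $\Gamma$ is circular: you propose to conclude that $\Gamma$ falls into the trichotomy because it is solvable and virtually upper triangular, which is the hypothesis you started from. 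The trichotomy is a statement about $\Gamma$ itself (commutative, or a specific kind of extension), not about a finite-index subgroup, so one must actually show that the finite overgroup preserves the structural type; this is handled in \cite{Mau2} by explicit normal forms, not by an inductive appeal to the theorem being proved.
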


	\begin{theorem} 
		If a  solvable  group $\Gamma$ is commutative, 
		then its limit set consists of either two lines, three lines in general position, or it is a line and a point, and $\Gamma$ 
		can be either triangular or a fake Hopf group:
		\begin{itemize}
			\item Diagonal groups:
			\[
			\left \{
			diag(\alpha^n,\alpha^m,1):n,m\in \Bbb{Z}
			\right \}
			\]
			here $\alpha,\beta \in \Bbb{C} ^*$, $\vert \alpha \vert\neq 1 $ or $\vert \beta \vert\neq 1$.
			\item Fake Hopf:
			\[
			\left \{
			\begin{bmatrix}
				\mu(w) & w\mu (w)&0\\
				0 &\mu(w)& 0\\
				0 &  0 &\mu(w)^{-2}
			\end{bmatrix}
			\right \}
			\]
			where $W\subset \Bbb{C}$ is a discrete group and $\mu:W\rightarrow \Bbb{C}^* $ is a group morphism, satisfying some technical conditions, see \cite{Mau2}.
		\end{itemize}
	\end{theorem}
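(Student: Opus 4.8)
The plan is to use commutativity to put $\Gamma$ into a normal form and then read off its limit set from the classification of the elements of $\PSL(3,\C)$ recalled above, together with Theorem~\ref{th. number of lines}. Since we are now dealing with solvable groups containing a loxodromic element, fix such a $g\in\Gamma$ and a lift $\tilde g\in\SL(3,\C)$ in Jordan form. A priori an element $h\in\Gamma$ only gives $\tilde h\tilde g\tilde h^{-1}=\omega\tilde g$ for some cube root of unity $\omega$; but comparing eigenvalues, $\omega\neq1$ would force $\tilde g$ to have eigenvalues $\{\zeta,\zeta\omega,\zeta\omega^2\}$ with $\zeta^3=1$, i.e.\ $g$ elliptic of order $3$, contradicting that $g$ is loxodromic. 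Hence $\omega=1$, the lifts genuinely commute, and $\Gamma$ preserves every eigenspace and generalized eigenspace of $\tilde g$. If $g$ is strongly loxodromic or a screw, $\tilde g$ has three distinct eigenvalues, hence three $\Gamma$-invariant lines in general position, so $\Gamma$ is a diagonal group. If $g$ is a complex homothety or loxoparabolic, $\tilde g$ has eigenvalues $\lambda,\lambda,\lambda^{-2}$ with $|\lambda|\neq1$; then $\Gamma$ preserves the $2$-dimensional (generalized) $\lambda$-eigenspace $V$, the $\lambda^{-2}$-eigenline $q\notin\mathbb{P}(V)$, and a common eigenvector of the commutative action induced on $V$, so in an adapted basis every element of $\Gamma$ has the block shape $\left(\begin{smallmatrix}\ast&\ast&0\\0&\ast&0\\0&0&\ast\end{smallmatrix}\right)$.

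Next I would split this block-triangular case. Either some element of $\Gamma$ has two distinct diagonal entries in its upper-left $2\times2$ block, in which case, the blocks being a commuting family, they are simultaneously diagonalizable and $\Gamma$ is again diagonal; or every upper-left block has a repeated eigenvalue, and (using $\det=1$ to fix the third diagonal entry and letting $w$ be the off-diagonal entry divided by the diagonal one) every element acquires the shape $\left(\begin{smallmatrix}\mu&\mu w&0\\0&\mu&0\\0&0&\mu^{-2}\end{smallmatrix}\right)$. These matrices form an abelian group isomorphic to $\C^*\times\C$ via $(\mu,w)$, and a discrete subgroup of it on which $w\mapsto\mu$ is single-valued is precisely a fake Hopf group; the exact constraints on $(W,\mu)$ ensuring discreteness, and excluding a spurious complex homothety factor, are the ``technical conditions'' of \cite{Mau2}. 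Moreover, if such a $\Gamma$ is not diagonal and contains a loxodromic element, it must contain a loxoparabolic one: either the loxodromic element is itself loxoparabolic, or it is a complex homothety, and composing it with a non-diagonal element of $\Gamma$ yields a loxoparabolic element.

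The limit set is then computed in the two cases. If $\Gamma$ is diagonal, consider the homomorphism $\rho\colon\Gamma\to\mathbb{R}^2$, $\mathrm{diag}(a,b,c)\mapsto(\log|a|,\log|b|)$, which is well defined on $\PSL(3,\C)$; discreteness of $\Gamma$ forces $\rho(\Gamma)$ to be a discrete subgroup of $\mathbb{R}^2$ (a sequence of distinct elements with bounded $\rho$-values would subconverge in $\PSL(3,\C)$, by compactness of the unitary part of the diagonal torus), necessarily of rank $1$ or $2$ since $g$ is loxodromic. The limit set is assembled from the kernels of the (necessarily diagonal) pseudo-projective limits of $\Gamma$ together with the points of infinite isotropy, and these kernels are coordinate lines or coordinate points. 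When $\mathrm{rank}\,\rho(\Gamma)=1$ one checks, according to whether the three log-norms are pairwise distinct or two of them coincide, that the limit set is two coordinate lines, or one coordinate line together with one isolated coordinate point. When $\mathrm{rank}\,\rho(\Gamma)=2$, any one coordinate can be made dominant, so all three coordinate lines are effective; since nothing else appears, the limit set is exactly those three lines, which are in general position. If $\Gamma$ is a fake Hopf group containing a loxoparabolic element, the lines $\{z_2=0\}$ and $\{z_3=0\}$ are invariant, and examining the pseudo-projective limits (normalizing by the dominant entry according to the growth of $\mu_n$ and $w_n$) shows that they are effective and that no further line or isolated point occurs, so the limit set is these two lines. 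In every case Theorem~\ref{th. number of lines} rules out spurious components, yielding precisely the three alternatives claimed.

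The main obstacle is the refinement in the second paragraph: showing that a commutative \emph{discrete} subgroup of the block-triangular group $\left(\begin{smallmatrix}\ast&\ast&0\\0&\ast&0\\0&0&\ast\end{smallmatrix}\right)$ is, up to conjugacy, diagonal or of fake Hopf type, and isolating the precise ``technical conditions'' on $(W,\mu)$ --- in particular the delicate sub-case in which $\Gamma$ contains simultaneously a unipotent parabolic and a complex homothety sharing the same value of $w$. Once the normal forms are in place, the limit-set computations, though requiring a careful enumeration of pseudo-projective limits, are routine given the classification of elements.
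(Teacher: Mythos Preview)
The paper does not prove this theorem. It is stated there as a summary of results imported from \cite{Mau1,Mau2} (``These summarize Theorems 1.1 to 1.4 in \cite{Mau2} and theorems 1 and 2 in \cite{Mau1}''), with no argument given in the body of the present paper. So there is no in-paper proof to compare your proposal against.

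That said, your outline follows the natural route and is essentially the one taken in the cited references: use a fixed loxodromic element to pin down invariant eigenspaces, use commutativity (after the cube-root bookkeeping, which you handle correctly) to simultaneously upper-triangularize, and then dichotomize the upper $2\times 2$ block into ``simultaneously diagonalizable'' versus ``repeated diagonal entry'', giving the diagonal and fake Hopf normal forms respectively. Your limit-set computations via pseudo-projective limits and Theorem~\ref{th. number of lines} are also the standard ones.

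The one genuine soft spot you yourself flag is real: in the block-triangular case, a commutative discrete $\Gamma$ can in principle contain a nontrivial complex homothety together with an element with $w\neq 0$, and then $w\mapsto\mu$ is not single-valued, so $\Gamma$ is neither diagonal nor literally of the displayed fake Hopf form. This is exactly what the paper hides behind ``satisfying some technical conditions, see \cite{Mau2}'': in Toledo-Acosta's classification the fake Hopf family is stated with extra hypotheses that rule out (or absorb) this degeneracy. Your proposal is honest about this gap; to close it you would need to reproduce those conditions from \cite{Mau2} rather than rediscover them, since the present paper gives you nothing further to work with.
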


	\begin{theorem} If the group is an extension of   either a torus group or   a dual torus group, then its limit set can be   a single projective line, or  two lines, or else it has infinitely many lines but  either it is a cone of lines over a circle or it has 
		four lines in general position. And one has:
		
		\begin{enumerate}
			\item If the group $\Gamma$ is a cyclic extension of a torus group, then it is a semi-direct product of a torus group and the cyclic group generated by a strongly  loxodromic element.  
			
			\item If the group is an extension of a dual torus groups, then it  can be:
			\begin{itemize}
				\item An extension by a loxodromic element (of any type); or
				\item An extension by two loxo-parabolic elements, and in this case the group is a semi-direct product of the dual torus group and the group generated by the two loxodromic elements.
				
			\end{itemize}
		\end{enumerate} 
	\end{theorem}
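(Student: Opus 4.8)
The plan is to combine the structure theorem for solvable groups (so that, after replacing $\Gamma$ by a finite index subgroup and conjugating, it consists of upper triangular matrices) with a description of the normalizer of a torus, respectively dual torus, group inside such triangular groups, and then to read off the limit set from the classification of the individual elements, the identity $\Lambda_{\Myr}(\Gamma)=\bigcup_{S\in\Gamma'}\mathrm{Ker}(S)$, and the inclusion $\Lambda_{\Kul}(\Gamma)\subset\Lambda_{\Myr}(\Gamma)$; the control group of Section \ref{s:control} will help organize the dynamics.

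First I would fix the normal torus or dual torus group $T$ supplied by the hypothesis; since $\Gamma$ is a proper extension of $T$ within the solvable setting under discussion, the extension adds loxodromic elements. The group $T$ is attached to a distinguished flag: $\mathcal{T}(\mathfrak L)$ fixes the line $\ell_0=\{z=0\}$ pointwise and fixes no point off it, while $\mathcal{T}^*(\mathfrak L)$ fixes the point $e_1$ and leaves invariant every line through $e_1$. Any $\gamma\in\Gamma$ preserves this flag; this already forces $\gamma$ to be triangular in the adapted basis and restricts its Jordan type, and it shows that $\Gamma$ fixes $e_1$, so the control group $\Pi_{e_1,\ell}(\Gamma)\subset\PSL(2,\C)$ is defined. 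A loxodromic $\gamma$ normalizing $\mathcal{T}(\mathfrak L)$ must fix $e_3$ and preserve an invariant line complementary to $\ell_0$, so that the induced conjugation action of $\gamma$ on $\C^2\supset\mathfrak L$ is $\C$-linear and carries the discrete set $\mathfrak L$ onto itself; the same holds, mutatis mutandis, in the dual torus case.

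The heart of the matter is this last constraint. Conjugation by a diagonal loxodromic element acts on $\mathfrak L$ by a diagonal map such as $(a,b)\mapsto(\lambda_1\lambda_3^{-1}a,\lambda_2\lambda_3^{-1}b)$, by a loxo-parabolic element by a map with a nonzero nilpotent part, by a complex homothety by a scalar homothety, and so on. I expect the main obstacle to be showing that discreteness of $\mathfrak L$, together with discreteness of $\Gamma$, rules out complex homotheties and screws (either would force $\mathfrak L$ to drop rank or to have a non-discrete orbit) and, in the torus case, leaves only the strongly loxodromic possibility, so that $\Gamma=\mathcal{T}(\mathfrak L)\rtimes\langle\gamma\rangle$; this is assertion (i). In the dual torus case one runs the same analysis on the three subtypes of $\mathcal{T}^*(\mathfrak L)$ (limit set a line, a cone of lines over a circle, or all of $\P^2$, the last being non-elementary and hence outside the scope of the statement): either a single loxodromic element of any type normalizes $\mathfrak L$, or else $\mathfrak L$ is too large for any single such element, and one is forced to adjoin two loxo-parabolic generators whose linear parts jointly preserve $\mathfrak L$; in that case $\Gamma$ is the semidirect product of $\mathcal{T}^*(\mathfrak L)$ with the rank two free abelian group they generate, which is assertion (ii). This is essentially Maubon's classification in \cite{Mau1, Mau2}, and the arithmetic of deciding when a diagonalizable or unipotent $\mathbb{R}$-linear map preserves a discrete subgroup of $\C^2$ is where nearly all of the work lies.

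Finally I would compute the limit set. Writing $\Gamma=T\rtimes A$ with $A$ generated by loxodromic elements, every sequence of distinct elements of $\Gamma$ is, up to a subsequence, of the form $t_na_n$ with $t_n\in T$ and either $a_n$ or $t_n$ leaving every compact set; a direct analysis of the pseudo-projective limits of such mixed sequences shows that the effective lines of $\Gamma$ are exactly $\ell_0$ (respectively the cone of the dual torus group) together with the invariant lines of the loxodromic elements of $A$, and no others (in the screw case the extra fixed point already lies on one of these lines, so does not enlarge the union). Hence $\Lambda_{\Myr}(\Gamma)$, and with it $\Lambda_{\Kul}(\Gamma)$, is this finite or conical union, and going through the finitely many configurations isolated in the previous step produces exactly a single projective line, two lines, a cone of lines over a circle, or four lines in general position. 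The algebraic assertions (i) and (ii) are then a restatement of the semidirect-product descriptions found along the way; for the full verification of all cases we refer to \cite{Mau1, Mau2}.
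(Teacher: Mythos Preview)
The paper does not give its own proof of this theorem: it is stated as a summary of results from Toledo-Acosta \cite{Mau1,Mau2}, with the preamble ``These summarize Theorems 1.1 to 1.4 in \cite{Mau2} and theorems 1 and 2 in \cite{Mau1}'', and no argument follows. So there is no paper-proof against which to compare; the theorem is simply imported from the cited references.

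Your proposal therefore goes beyond what the paper does. The outline is sound in its broad strokes: normality of $T$ in $\Gamma$ forces each $\gamma$ to preserve the flag attached to $T$, conjugation by $\gamma$ acts linearly on the lattice $\mathfrak L$, and discreteness of $\mathfrak L$ constrains the possible Jordan types of the loxodromic elements one can adjoin. This is indeed the mechanism behind the classification in \cite{Mau1,Mau2}, and your closing appeal to those references for the full case analysis matches the paper's stance. Two minor cautions: the assertion that the two loxo-parabolic generators in case (ii) generate a \emph{rank two free abelian} group is stronger than what the theorem claims (it says only ``the group generated by the two loxodromic elements'') and would need justification; and the limit-set computation via pseudo-projective limits of mixed sequences $t_n a_n$ is more delicate than your sketch suggests, since the interaction between the unipotent drift of $t_n$ and the eigenvalue growth of $a_n$ is exactly what produces the cone-over-a-circle and four-lines configurations, and getting the inclusions $\Lambda_{\Kul}\subset\Lambda_{\Myr}$ to be equalities requires separate work. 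But as a roadmap your proposal is reasonable and, since the paper itself offers none, it is strictly more informative.
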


	In the  sequel we describe all groups with limit set as stated in this theorem.

	\section{Groups with limit set exactly  one line}\label{s:oneline}
	In the previous section we described several purely parabolic groups with limit set  a single  line.  
	Here we follow  \cite{BCN3} 
	and describe all the  discrete 
	groups in $\PSL(3,\C)$ with limit set  a  line. In particular, every such group    is virtually
	nilpotent. 
	
	Let $\Gamma$ be complex Kleinian with limit set a line $\ell$. 
	Then $\Gamma$  acts
	properly and discontinuously on the complement of  $\ell$. From the above described classification of the elements in $\PSL(3,\C)$ we known that  
	the elements in $\Gamma$ are all either elliptic, parabolic or loxoparabolic. 
	If the group contains a loxoparabolic element, then one finds that $\Gamma$ is conjugate to a group such that every element can be represented by a matrix of the form:
	$$
	\left(
	\begin{array}{lll}
		a & 0&v\\
		0 & d & 0\\
		0 & 0& 1\\ 
	\end{array} 
	\right) \,,
	$$
	where  $\ell$ becomes the line $\overleftrightarrow{e_1,e_2}$. Moreover, $\Gamma$ acts as an Euclidean group on the line $\Flecha{e_1, e_3}$. By well-known facts on Euclidean groups (see \cite{Ma}), 
	$a$ is a root of unity of order $1, 2, 3, 4$ or $6$, and the rank of $\Gamma$ considered as acting on  $\Flecha{e_1,e_3}$ is equal to $1$ or $2$. 
	One can show that $a$ cannot be a root of unity of order $2,3,4$ nor $6$, otherwise $\Gamma$ would contain a complex homothety. Finally, the rank of $\Gamma$ considered as acting on 
	$\Flecha{e_1,e_3}$ is not equal to $1$, otherwise, the Kulkarni limit set of $\Gamma$ would be equal to two lines. 
	
	
	On the other hand, if the group does not contain any loxoparabolic elements, then there are two cases. If it acts on $\ell$ without parabolic elements then $\Gamma$ can be
	considered as a discrete group of Euclidean isometries of $\mathbb R^4$. If some element   acts on $\ell$ as parabolic element then the
	group can be identified with a group of triangular matrices. Then one can rule out 
	the existence of irrational ellipto-parabolic elements and show that  there exists a unipotent subgroup of finite index.
	
	In this way we arrive to the following  \cite[Theorem 1.1] {BCN3}: 
	
	\begin{theorem}
		Let $\Gamma$  be a subgroup of $\PSL(3,\C$) such that its Kulkarni limit 
		consists of precisely one complex projective line $\ell$. Then:
		\begin{enumerate}
			\item If $\Gamma$ does not contain  loxoparabolic elements nor
			an element which acts as a parabolic element on the line $\ell$, then $\Gamma$  is a group of isometries of $\C^2$ and it contains a free abelian normal subgroup of finite index and rank $\le 4$.
			
			\item If $\Gamma$ does not contain  loxoparabolic elements but it does contain 
			an element which acts as a parabolic element on $\ell$, then $\Gamma$  does
			not contain any irrational ellipto-parabolic elements and it is a finite extension
			of a unipotent subgroup that consists of unipotent parabolic maps.
			Hence it is a finite extension of a group of the form $\Z, 
			\Z^2, \Z^3, \Z^4$, $\Delta_k$ or $G_k$, where 
			$$\Delta_k = \large \langle A, B, C, D : \, C, D \; \hbox{are central and} \; \,[A,B] = C^k \large \rangle \,, k \in \mathbb N \;,
			$$
			$$ G_k  = \large \langle A, B, C : \, C \; \hbox{is central and} \; \,[A,B] = C^k \large \rangle \,, k \in \mathbb N \;,
			$$
			\item If $\Gamma$  does contain a loxoparabolic element, then $\Gamma$  is isomorphic to 
			the group $\Z\oplus \Z \oplus \Z_{n_0}$, where $n_0 \in \mathbb N$
			is arbitrary. The $\Z_{n_0}$ summand is a group of complex
			reflections, while $\Z\oplus \Z$ is generated by a loxoparabolic element and another element
			which can be loxoparabolic or parabolic. 
		\end{enumerate}
	\end{theorem}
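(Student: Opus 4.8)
The plan is to exploit the classification of elements of $\PSL(3,\C)$ recalled above together with the single strong hypothesis that $\Lambda_{\Kul}(\Gamma)=\ell$ is one closed, $\Gamma$-invariant complex line. First I would pin down which element types can occur in $\Gamma$. If $\gamma\in\Gamma$ is elliptic of infinite order, then $\Gamma$ is not discrete, a contradiction; so every elliptic element of $\Gamma$ has finite order. If $\gamma$ is a complex homothety, a screw, or a strongly loxodromic element, then reading off its Jordan form one finds three projective fixed points of $\gamma$ in general position (for a complex homothety, take any two points on its pointwise-fixed line together with the isolated fixed point); since $\gamma$ has infinite order, each of these has infinite $\Gamma$-isotropy, hence lies in $L_0(\Gamma)\subset\Lambda_{\Kul}(\Gamma)=\ell$, which is impossible because three points in general position do not lie on a line. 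Hence every nontrivial element of $\Gamma$ is elliptic of finite order, parabolic, or loxoparabolic. Since $\Lambda_{\Kul}(\Gamma)$ is $\Gamma$-invariant, $\Gamma$ preserves $\ell$; moreover, for any parabolic or loxoparabolic $\gamma\in\Gamma$ its associated invariant line must be $\ell$ itself, since the fixed points of $\gamma$ lie in $L_0(\Gamma)=\ell$ and, were its parabolic or loxodromic dynamics to take place on some other invariant line, the orbits there would force a second line or an extra point into $\Lambda_{\Kul}(\Gamma)$.

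Next I would split into the three cases. \emph{Case (iii): $\Gamma$ contains a loxoparabolic element $\gamma_0$.} A normalizer computation, using that $\Gamma$ preserves $\ell$ and that distinct loxoparabolic or parabolic elements of $\Gamma$ must share their invariant data, shows that after conjugation every element of $\Gamma$ has the form $\left(\begin{smallmatrix} a&0&v\\0&d&0\\0&0&1\end{smallmatrix}\right)$ with $\ell=\overleftrightarrow{e_1,e_2}$; restriction to $\overleftrightarrow{e_1,e_3}$ then realizes $\Gamma$ as a discrete group of affine maps $z\mapsto az+v$ of $\C$. By the structure of discrete planar Euclidean groups (see \cite{Ma}), the rotational parts $a$ generate a finite cyclic group of order $1,2,3,4$ or $6$ and the translation part has rank $1$ or $2$. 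One then shows $a\equiv1$ — otherwise a suitable power of, or product involving, such an element together with $\gamma_0$ is a complex homothety, excluded above — and that the translation rank equals $2$, otherwise both invariant lines of $\gamma_0$ would survive in $\Lambda_{\Kul}(\Gamma)$, contradicting that it is a single line. What remains is generated by $\gamma_0$, a second commuting loxoparabolic-or-parabolic element completing the rank-two lattice, and a finite central cyclic group of complex reflections, so $\Gamma\cong\Z\oplus\Z\oplus\Z_{n_0}$ with $n_0$ arbitrary.

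\emph{Cases (i)--(ii): $\Gamma$ has no loxoparabolic element,} so it consists of finite-order elliptics and parabolics. If no element of $\Gamma$ acts as a $\PSL(2,\C)$-parabolic on $\ell$, then one checks that $\Gamma$ preserves a flat metric on $\P^2\setminus\ell\cong\mathbb{R}^4$ (its elements act on $\ell$, and transversally, through torsion elliptic linear parts) and acts there by isometries; Bieberbach's theorem then gives a free abelian normal subgroup of finite index and rank $\le4$, which is (i). If some element does act parabolically on $\ell$, then $\Gamma$ is conjugate into the unipotent upper-triangular subgroup; here one rules out irrational ellipto-parabolic elements — incompatible with discreteness in the presence of genuine parabolics — and shows a finite-index subgroup is unipotent and consists of unipotent parabolic maps. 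The admissible such subgroups, being two-step nilpotent lattices inside $\mathrm{Heis}(3,\C)$ times an abelian factor, are exactly $\Z,\Z^2,\Z^3,\Z^4,\Delta_k$ and $G_k$, which is (ii). In particular $\Gamma$ is virtually nilpotent in all three cases.

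The step I expect to be the main obstacle is, in case (iii), producing the single normal form $\left(\begin{smallmatrix} a&0&v\\0&d&0\\0&0&1\end{smallmatrix}\right)$ valid simultaneously for every element of $\Gamma$ — equivalently, upgrading ``$\Gamma$ preserves $\ell$'' to ``$\Gamma$ preserves a full flag adapted to $\gamma_0$'' — and then tracking carefully the interplay between the translation lattice on $\overleftrightarrow{e_1,e_3}$, the loxodromic eigenvalue $d$, and the finite reflection part so as to land precisely on $\Z\oplus\Z\oplus\Z_{n_0}$. In the parabolic subcase of (ii) the delicate points are the exclusion of irrational ellipto-parabolic elements and the exact identification of the nilpotent lattices $\Delta_k$ and $G_k$; for these computations we refer to \cite{BCN3}.
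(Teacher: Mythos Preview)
Your proposal is correct and follows essentially the same route as the paper's own (expository) argument: first restrict the element types in $\Gamma$ to elliptic, parabolic and loxoparabolic, then in the loxoparabolic case conjugate to the common normal form $\left(\begin{smallmatrix}a&0&v\\0&d&0\\0&0&1\end{smallmatrix}\right)$, read off a discrete Euclidean action on $\overleftrightarrow{e_1,e_3}$, rule out $a\ne 1$ via an induced complex homothety and rule out translation rank $1$ via a second line in $\Lambda_{\Kul}$, while in the non-loxoparabolic cases use Bieberbach for (i) and triangularization plus exclusion of irrational ellipto-parabolics for (ii). The only point where you are slightly more explicit than the paper is the ``three fixed points in general position lie in $L_0(\Gamma)$'' argument to exclude complex homotheties, screws and strongly loxodromic elements; conversely, the paper states (and you implicitly use) that $|a|=1$ so that the induced action on $\overleftrightarrow{e_1,e_3}$ is genuinely Euclidean, which is worth making explicit before invoking the crystallographic classification.
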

	

	\section{Groups with limit set two lines}\label{sec 4} 
	
	We must distinguish two cases:
	
	a) Groups that have exactly two lines in their limit set. These are elementary of the first kind; and 
	
	b) Groups with more than two lines in the limit set, but only two  in general position.  In this case, we know from Theorem \ref {th. number of lines} that the total number of lines actually is infinite. These are elementary of the second kind.
	
	For instance, cyclic groups generated by a strongly loxodromic element have exactly two lines in their limit set. On the other hand, for instance, the dual torus groups of the second type, described before, have limit set a cone of projective lines over a circle. In this case, there are infinitely many lines in the limit set, but only two in general position.
	
	Let us describe these  elementary groups.
	As in 
	\cite{BCN4}, 
	for every open $\Gamma$-invariant set $U$
	we denote by $\lambda(U)$   the maximum number of complex
	projective lines  contained in $\mathbb{P}^2_{\mathbb{C}}\setminus
	U$, and by 
	$\mu(U)$  the maximum number of such lines in general position. If $U$ is the Kulkarni discontinuity region $\Omega_{\Kul}(\Gamma)$, then its complement $\mathbb{P}^2_{\mathbb{C}}\setminus
	U$ is the limit set  $\Lambda_{\Kul}= \Lambda_{\Kul}(\Gamma)$.

	\begin{theorem} \label{t:2lines}
		Let $\Gamma\subset {\rm PSL}(3,\mathbb{C})$ be a group satisfying $\mu(\Omega_{\Kul}(\Gamma))=2$, then either  the limit set  is the union of two lines  or it is a pencil  of lines over a nowhere dense  perfect set. Moreover, one of the following facts occurs:
		\begin{enumerate}
			\item if $\lambda(\Omega_{\Kul}(\Gamma))=2$, then $\Gamma$ is virtually solvable and contains  a loxodromic element, see  \cite{Mau1,Mau2} 
			for a precise description.    
			\item     if $\lambda(\Omega_{\Kul}(\Gamma))=\infty$, then $\Gamma$ either is  virtually purely parabolic  or contains a loxodromic element, and:
			\begin{enumerate}
				\item If the  group is virtually  purely parabolic then it contains a finite index subgroup conjugate to one of the following groups: a dual torus
				group of type II, a Kleinian Inoue group, or an extended Kodaira group
				$\mathcal{K}_2$. 
				\item If the group contains a loxodromic element, then either  $\Gamma$ is virtually conjugate to a solvable group
				(described in \cite{Mau2}),    
				or  it is  a  weakly controllable group  whose   control group is  non-elementary. 
			\end{enumerate}
		\end{enumerate}
	\end{theorem}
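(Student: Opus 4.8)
The plan is to reduce, via Theorem~\ref{th. number of lines}, to two possible shapes for the limit set, and then in each shape pin down enough invariant structure to invoke the classifications already in hand: the purely parabolic groups of Section~\ref{sec parabolic}, the one-line groups of Section~\ref{s:oneline}, and the solvable groups classified by Maubon and quoted above. For the dichotomy, write $\lambda=\lambda(\Omega_{\Kul}(\Gamma))$. Since $\mu(\Omega_{\Kul}(\Gamma))=2$ we cannot have $\lambda=1$, and by Theorem~\ref{th. number of lines}(iii) we cannot have $\lambda=3$; hence $\lambda\in\{2,\infty\}$ by Theorem~\ref{th. number of lines}(i). If $\lambda=2$, the limit set is a union of two distinct lines. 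If $\lambda=\infty$, then no three lines of $\Lambda_{\Kul}(\Gamma)$ are in general position, so any three of them are concurrent; as the common point of two of the lines then lies on every other line, all of them pass through a single point $p$, and $\Lambda_{\Kul}(\Gamma)$ is the pencil of lines through $p$ over a base $S\subset\P^1$. This $S$ is closed and infinite, and is perfect by Theorem~\ref{th. number of lines}(iv); that $S$ is also nowhere dense will be read off from the explicit families produced below.

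\emph{The case $\lambda=2$.} The group permutes the two lines of $\Lambda_{\Kul}(\Gamma)$, so an index~$\le 2$ subgroup $\Gamma_0$ fixes each of them and hence fixes their intersection point. Choosing coordinates in which the two lines are $\{z=0\}$ and $\{y=0\}$, every element of $\Gamma_0$ is represented by a matrix with $g_{21}=g_{23}=g_{31}=g_{32}=0$; the group of all such matrices is metabelian --- on lifts to $\SL(3,\C)$ the subgroup with unit diagonal is normal, isomorphic to $(\C^2,+)$, and has abelian quotient --- so $\Gamma_0$, and therefore $\Gamma$, is virtually solvable. It remains to produce a loxodromic element: if $\Gamma$ had none, then (being discrete) it would be virtually purely parabolic, and by the list in Section~\ref{sec parabolic} its limit set would be a single line, a cone of lines over a circle, or all of $\P^2$, contradicting $\lambda=2$. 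Hence $\Gamma$ contains a loxodromic element, and the precise description then follows from \cite{Mau1,Mau2}.

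\emph{The case $\lambda=\infty$.} Every $\gamma\in\Gamma$ carries the pencil to itself and hence fixes its vertex $p$; so $\Gamma$ is weakly controllable and the control morphism $\Pi=\Pi_{p,\ell}\colon\Gamma\to\PSL(2,\C)$ of Section~\ref{s:control} is defined. Its kernel consists of the collineations fixing every line through $p$; in suitable coordinates these are the matrices whose upper-left $2\times2$ block is scalar and whose third column is a multiple of $e_3$, and such matrices form a metabelian group, so $\ker(\Pi|_\Gamma)$ is solvable. Since $\Gamma$ is discrete it has no infinite-order elliptic element, so either $\Gamma$ has no loxodromic element, in which case it is virtually purely parabolic, or it contains one. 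In the first case, the only purely parabolic groups whose limit set is a pencil over a nowhere dense perfect set are, by Section~\ref{sec parabolic}, the dual torus groups of type II, the Kleinian Inoue groups, and the extended Kodaira group $\mathcal K_2$, so a finite-index subgroup of $\Gamma$ is conjugate to one of these. In the second case we dichotomize on the control group $\Pi(\Gamma)\subset\PSL(2,\C)$: if it is non-elementary we are in the last stated case; if it is elementary it is virtually solvable, and since $\ker(\Pi|_\Gamma)$ is solvable, $\Gamma$ is an extension of a virtually solvable group by a solvable group, hence virtually solvable, and so virtually conjugate to one of Maubon's solvable groups.

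\emph{Where the difficulty lies.} The delicate part is the case $\lambda=\infty$ with a loxodromic element: one has to verify that the control-group dichotomy really is exhaustive --- that an elementary control group forces $\Gamma$ to be virtually solvable through the (merely solvable, not abelian) kernel of $\Pi$, and that no behaviour lies between ``virtually purely parabolic'' and ``contains a loxodromic element'' --- and one also has to justify, with the help of \cite{Mau1,Mau2,BCN4}, that the base $S$ of the pencil is genuinely nowhere dense and not merely closed and perfect. It is in these verifications that the quoted classifications do the real work.
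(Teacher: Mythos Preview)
Your proof is correct and follows essentially the same route as the paper's: reduce to $\lambda\in\{2,\infty\}$ via Theorem~\ref{th. number of lines}, exploit the invariant line/point structure to obtain virtual solvability where needed, and then invoke the purely parabolic classification of \cite{BCNS} and the solvable classification of \cite{Mau1,Mau2}. The only differences are cosmetic: in the $\lambda=2$ case you read off metabelianness from the explicit matrix shape of the stabilizer of the two lines, whereas the paper argues via the control group (the two intersection points on the horizon force $\Pi(\Gamma)$ to be virtually diagonal); and in the $\lambda=\infty$ loxodromic case you run the dichotomy on $\Pi(\Gamma)$ (elementary vs.\ non-elementary) while the paper runs it on $\Gamma$ (virtually solvable vs.\ not) --- these are contrapositives of each other, both resting on the fact that a subgroup of $\PSL(2,\C)$ is elementary iff it is virtually solvable. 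One small correction: the solvable classification in \cite{Mau1,Mau2} is due to Toledo-Acosta, not Maubon.
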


	\begin{proof} 
		Since  $\mu(\Omega_{\Kul}(\Gamma))=2$, we have that $\Gamma$ has a global fixed point $p$, the point where those two lines intersect, 
		and  $\lambda(\Omega_{\Kul}(\Gamma))$ is either 2 or $\infty$ by  Theorem 1.1 
		\cite{BCN4}.
		In fact, 
		will see in the next section that if $\lambda(\Omega_{\Kul}(\Gamma)) = 3$, then also $\mu(\Omega_{\Kul}(\Gamma)) = 3$. And we know from Theorem 
		\ref{th. number of lines} 
		that if $ \lambda(\Omega_{\Kul}(\Gamma)) > 3$, then this number actually is $\infty$.

		Also, if $\lambda(\Omega_{\Kul}(\Gamma))=2$, then,  we claim, that $\Gamma$ is virtually solvable. To see this notice that the group is 
		weakly semi-controllable. Choose a line $\ell$ as  the ''horizon". Then $\Lambda_{\Kul}$ meets $\ell$ in two points which are invariant under the control group $\Pi(\Gamma)$. Hence $\Pi(\Gamma)$ is virtually diagonalizable and therefore $\Gamma$ itself  is virtually diagonalizable. Hence it is virtually solvable. Then, 
		by Theorem  0.1 in \cite{BCNS},  
		we deduce that $\Gamma$  contains a loxodromic element. Then, by \cite{Mau1,Mau2}, 
		its limit set consists of exactly two lines. 
		
		In case  $\lambda(\Omega_{\Kul}(\Gamma))=\infty$,  we  have that  $\Gamma$ is either virtually purely parabolic  or  it contains a loxodromic element. In the first case by  part (b) item (2) of  Theorem 0.1 in \cite{BCNS} 
		we have  that $\Gamma$ contains a finite index subgroup conjugate to one of the following groups: a dual torus
		group of type II, a Kleinian Inoue group, or an extended Kodaira group $\mathcal{K}_2$, thus, in this case, the limit set is  a cone of lines over a Euclidean circle, see section 2 of \cite{BCNS}. 
		Now     the proof splits into the following cases:
		
		{\bf  Case 1.} The group $\Gamma$ is virtually solvable. In this case by Theorems  1.1, 1.2, 1.3, 1.4  in \cite{Mau2}, 
		the limit set of $\Gamma$  is either two lines or  a cone  of lines over an Euclidean circle.
		
		{\bf  Case 2.} The group $\Gamma$ is non-virtually solvable. In this case it is clear that the control group   $\Pi_{p,\ell}\Gamma$ is non-elementary, here $\ell$ is any line not containing $p$, and 
		\[
		\Lambda_{\Kul}(\Gamma)=
		\bigcup_{z\in \Lambda(\Pi(\Gamma)) }
		\overleftrightarrow{p,z}.
		\]
		
		That $\Pi_{p,\ell}\Gamma$ is non-elementary follows from the fact that, essentially by Borel's fixed point theory, a subgroup of $\PSL(2,\C)$ is elementary if and only if it is virtually solvable (see 
		\cite{CS}).
	\end{proof}

	
	\section{Groups with limit set three lines}\label{sec 5}
	We now describe the subgroups of $\PSL(3,\C)$ with  three lines in their limit set. 
	As in the previous section,  we denote by $\lambda = \lambda(\Omega_{\Kul}(\Gamma))$   the maximum number of complex
	projective lines  contained in $\Lambda_{\Kul}$, and by 
	$\mu = \mu(\Omega_{\Kul}(\Gamma))$  the maximum number of such lines in general position. Then
	there are groups with $\mu = \lambda = 3$, and there are groups  with $\lambda = \infty$ but $\mu =3$.  In the case $\lambda = \infty$ the groups in question are ``suspensions'' of non-elementary Kleinian groups in $\PSL(2,\C)$, see  Chapter 5  in \cite{CNS}:

	\begin{theorem}
		
		Let $\Gamma\subset {\rm PSL}(3,\mathbb{C})$ be a group for which $\mu=3$. Then $\lambda$ can be either $3$ or $\infty$ and: 
		\begin{enumerate}
			\item If $\lambda = 3$, then these lines are in general position  and $\Gamma$ contains a finite index subgroup   conjugate to: 
			\[
			G_0=
			\left \{
			diag[
			\alpha^n,\beta^m, 1]: n,m\in \mathbb{Z} 
			\right \} \;,
			\]
			where $\alpha,\beta \in \mathbb{C}^*$ are non-unitary complex numbers.
			\item If $\lambda = \infty$, then the limit set is a cone of lines plus a non-concurrent line and  $\Gamma$ contains a finite index subgroup   conjugate to: 
			\[
			H_{\Sigma,\rho, \alpha}=
			\left \{
			\begin{bmatrix}
				\alpha^n\rho([g])& 0 \\
				0 &g \\
			\end{bmatrix}: n\in \mathbb{Z}, g\in {\rm SL}(2,\mathbb{C}),\,\,  [g]\in \Sigma
			\right \} \;,
			\]
			where  $\alpha\in \mathbb{C}^* $ is a non-unitary  complex number, $\Sigma$  is a non-elementary discrete  group in ${\rm PSL}(2,\mathbb{C})$ with  non-empty discontinuity region, and $\rho$ is a function  $\Sigma\rightarrow \C^*$. 
			In this case 
			\[
			\Lambda_{\Kul}(H_{\Sigma,\rho, \alpha})
			=\overleftrightarrow{[e_2],[e_3]}\cup \bigcup_{p\in \Lambda(\Sigma)} \overleftrightarrow{[e_1],p} \;.
			\]
			
		\end{enumerate}
		
	\end{theorem}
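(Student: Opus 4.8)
The first assertion is immediate from Theorem \ref{th. number of lines}: since $\mu=3$ we have $\lambda\ge 3$, so part (i) leaves only $\lambda\in\{3,\infty\}$, and part (iii) gives that the three lines are in general position when $\lambda=3$. The plan is to treat the two values of $\lambda$ separately.

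\textbf{The case $\lambda=3$.} Here $\Lambda_{\Kul}(\Gamma)=\ell_1\cup\ell_2\cup\ell_3$ is a triangle with vertices $p_{ij}=\ell_i\cap\ell_j$. Since $\Lambda_{\Kul}(\Gamma)$ is $\Gamma$-invariant, $\Gamma$ permutes the $\ell_i$ and hence the $p_{ij}$, so the kernel $\Gamma_0$ of the induced map $\Gamma\to S_3$ has finite index and, in coordinates where the three vertices are $[e_1],[e_2],[e_3]$, fixes each of them; thus $\Gamma_0$ is a discrete abelian group of diagonal matrices. The morphism $\phi\colon\Gamma_0\to\mathbb R^2$ sending $\mathrm{diag}(t_1,t_2,t_3)$ to $(\log|t_1/t_3|,\log|t_2/t_3|)$ has finite kernel — that kernel consists of diagonal matrices whose entries have equal modulus and so lies in a compact torus, in which a discrete group cannot accumulate — so $\Gamma_0$ is a finite extension of a free abelian group of rank $r\le 2$, with $r\ge 1$ because $\Gamma$ is infinite. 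If $r=1$, computing the pseudo-projective limits of a cyclic diagonal group shows that $\Lambda_{\Kul}$ then has only two lines, contradicting $\lambda=3$; hence $r=2$, and $\Gamma$ contains a finite index rank-two subgroup of the diagonal torus whose non-torsion elements are all loxodromic (a non-torsion element with a unitary entry would be elliptic of infinite order, impossible in a discrete group, or else would drop the rank). Choosing suitable generators puts this subgroup in the form $G_0$ with $\alpha,\beta$ non-unitary, and one checks directly that its limit set is exactly the coordinate triangle.

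\textbf{The case $\lambda=\infty$.} Fix $\ell_1,\ell_2,\ell_3\subset\Lambda_{\Kul}(\Gamma)$ in general position with vertices $p_{ij}$. Since $\mu=3$, any fourth line of $\Lambda_{\Kul}(\Gamma)$ forms a concurrent triple with two of the $\ell_i$, hence passes through one of the $p_{ij}$; as there are infinitely many lines, some vertex, say $p:=p_{12}$, carries infinitely many of them, and note $p\notin\ell_3$. I would then show, by a short incidence argument, that every line of $\Lambda_{\Kul}(\Gamma)$ other than $\ell_3$ passes through $p$: if a second line $\ell'\ne\ell_3$ avoided $p$, then for every pair of distinct lines $m_1,m_2$ through $p$ the quadruple $\{m_1,m_2,\ell_3,\ell'\}$ would contain a concurrent triple; since $m_1\cap m_2=\{p\}$ lies on neither $\ell_3$ nor $\ell'$, this triple must be $\{m_i,\ell_3,\ell'\}$ for some $i$, so one of $m_1,m_2$ passes through $\ell_3\cap\ell'=:q\ne p$; hence all but one line through $p$ also passes through $q$, leaving only finitely many lines through $p$, a contradiction. (And not all lines pass through $p$, else $\mu=2$.) Thus $\Lambda_{\Kul}(\Gamma)$ is a cone of lines over the point $p$ together with one non-concurrent line $\ell_0$; moreover $p$ is the unique point lying on infinitely many lines of $\Lambda_{\Kul}(\Gamma)$ and $\ell_0$ the unique line missing it, so both are $\Gamma$-invariant. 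In coordinates with $p=[e_1]$ and $\ell_0=\overleftrightarrow{[e_2],[e_3]}$, fixing $[e_1]$ and preserving $\ell_0$ forces every $\gamma\in\Gamma$ into block-diagonal form $\gamma=\begin{bmatrix}a(\gamma)&0\\0&M(\gamma)\end{bmatrix}$ with $M(\gamma)\in\GL(2,\C)$. Projecting onto the lower block gives the control morphism $\Pi\colon\Gamma\to\PSL(2,\C)$ with horizon $\ell_0$; set $\Sigma=\Pi(\Gamma)$. The cone lines are permuted by $\Gamma$ via the $\Sigma$-action on $\ell_0\cong\P^1$, and by the suspension description of Chapter 5 of \cite{CNS} a cone line $\overleftrightarrow{[e_1],z}$ lies in $\Lambda_{\Kul}(\Gamma)$ precisely when $z\in\Lambda(\Sigma)$; since the cone is infinite, $\Lambda(\Sigma)$ is infinite, it is perfect by Theorem \ref{th. number of lines}(iv), and $\Lambda(\Sigma)\ne\P^1$ because $\Omega_{\Kul}(\Gamma)\ne\varnothing$, so $\Sigma$ is non-elementary with non-empty discontinuity region. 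Then one shows $\Sigma$ may be taken discrete and reads off from the block structure a finite index subgroup of $\Gamma$ that is an extension of a lift of $\Sigma$ to $\SL(2,\C)$ by the cyclic group generated by a loxodromic element scaling the $[e_1]$-direction; in suitable coordinates this is exactly $H_{\Sigma,\rho,\alpha}$ for a non-unitary $\alpha\in\C^*$ and a function $\rho\colon\Sigma\to\C^*$. Finally, computing the pseudo-projective limits of the elements of $H_{\Sigma,\rho,\alpha}$ — the $\alpha^n$-factor producing the line $\overleftrightarrow{[e_2],[e_3]}$ and the $\SL(2,\C)$-blocks producing the cone over $\Lambda(\Sigma)$ — yields the displayed formula for $\Lambda_{\Kul}(H_{\Sigma,\rho,\alpha})$.

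The hard part will be in the case $\lambda=\infty$: establishing that the control group can be taken discrete — that is, ruling out a non-discrete, hence $\mathbb R$-Fuchsian–like, control group — and then matching the block structure of $\Gamma$ precisely against the normal form $H_{\Sigma,\rho,\alpha}$, with its twisting function $\rho$ and scaling parameter $\alpha$; this is exactly where one must invoke the structure theory developed in \cite{CNS} and \cite{BCN4} (and, in the virtually solvable sub-case, the descriptions in \cite{Mau1,Mau2}). By comparison the case $\lambda=3$ is routine, the only points of substance being the exclusion of the rank-one case and the reduction to the normal form $G_0$.
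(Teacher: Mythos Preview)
Your architecture matches the paper's: split on $\lambda$, diagonalize when $\lambda=3$, and for $\lambda=\infty$ isolate a unique vertex together with the opposite invariant line, then pass to the control morphism on that line. Your incidence argument for the uniqueness of the vertex is a clean self-contained substitute for the paper's appeal to Propositions~5.6 and~6.5 of \cite{BCN4}, and your log-modulus homomorphism in the $\lambda=3$ case does by hand what the paper outsources to Theorem~1.1 of \cite{Mau2}.

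The gap is exactly where you flag it, and the paper's mechanism is worth knowing because it is not a direct structure computation but a dichotomy. First (the paper's Claim~2) one rules out $\Gamma$ being virtually solvable: by \cite{BCNS} such a $\Gamma$ contains a loxodromic element, and then the representation theorems of \cite{Mau2} force $\Gamma$ into a concrete semidirect-product form whose parabolic part is governed by an additive subgroup $W\subset\mathbb{C}$ that a short computation shows cannot be discrete, a contradiction. Second (Claim~1), assuming $\Pi(\Gamma)$ is non-virtually solvable, one shows it is discrete via Greenberg's description \cite{Leon} of closures of non-discrete subgroups of $\PSL(2,\mathbb{C})$: the identity component of $\overline{\Pi(\Gamma)}$ would be $\PSL(2,\mathbb{C})$, conjugate to $\SO(3)$, or conjugate to $\PSL(2,\mathbb{R})$; in the first two cases every line through the vertex is accumulated by orbit lines (forcing $\mu\le 2$), and in the third a commutator trick ($[G_1,G_n]\to\mathrm{Id}$ in $\Gamma$ when $g_n\to\mathrm{Id}$ in $\Pi(\Gamma)$) contradicts discreteness of $\Gamma$. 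This is what you should invoke instead of a generic pointer to \cite{CNS,BCN4}.

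One small slip in the $\lambda=3$ case: a non-torsion diagonal element with two entries of equal modulus need not be elliptic of infinite order; it can be a screw, which is loxodromic. So your parenthetical justification for choosing $\alpha,\beta$ non-unitary is not quite right. The conclusion is still reachable (choose generators whose $\phi$-images avoid the coordinate axes, possible for any rank-two lattice in $\mathbb{R}^2$), but the reasoning needs adjustment; the paper sidesteps this entirely by citing \cite{Mau2}.
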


	\begin{proof} 
		We   know from statement (iii) in Theorem \ref{th. number of lines} that if $\lambda = 3$ then  $\mu = 3$.
		That is, if the group has exactly three lines in its limit set, then these lines are in general position. 
		
		Now we assume $\mu = 3$ and 
		we choose three distinct  lines in general position contained in $\Lambda_{\Kul}(\Gamma)$, say $\ell_1,\ell_2,\ell_3$. We define 
		$$P=\{p_{ij}=\ell_i\cap \ell_j:  1\leq i<j\leq 3\}\,,$$   then any line contained in  $\Lambda_{\Kul}(\Gamma)$ must intersect $P$.

		We will say $p\in P$ is a vertex whenever there are infinite lines in $\Lambda_{\Kul}(\Gamma)$ passing through it. 
		If the set of vertices is empty, then by  Proposition 5.6 in \cite{BCN4}
		the set  $P$ is a $\Gamma$-invariant set.  Then $H=Isot(P, \Gamma)$ is a finite index subgroup  of $\Gamma$. Moreover, $\Lambda_{\Kul}(H)=\Lambda_{\Kul}(\Gamma)$ and $H$  is conjugate  to a group $G_0$ where every element has a diagonal lift. So by  Theorem 1.1 in \cite{Mau2} 
		we conclude 
		$
		G_0=
		\{
		diag[\alpha^n, \beta^m,1]:n,m\in \mathbb{Z}
		\}
		$ for some $\alpha,\beta \in \mathbb{C}^*$ non-unitary complex numbers. 
		
		If the set of vertices  is non-empty then by  the proof of Proposition 6.5 in \cite{BCN4} 
		we must have that this set is $\Gamma$-invariant. Observe that in case the number of vertices is  at least two,  we must have four lines in general position (compare with Proposition 6.5 in \cite{BCN4}) 
		which is not possible, therefore we have exactly one vertex, say $v$,  which is  $\Gamma$-invariant. Let $\ell\in \{\ell_1,\ell_2,\ell_3\}$ be a line not containing $v$. Since $\mu(\Omega_{\Kul}(\Gamma))=3$, we conclude that $\ell$ is $\Gamma$-invariant. After conjugating with a projective transformation, if necessary, let us assume that $v=[e_1]$ and $\ell=\overleftrightarrow{[e_2],[e_3]}$, as in Section \ref{s:control} take $\pi=\pi_{v,\ell}$ and $\Pi=\Pi_{v,\ell}$, now we claim:

		{\bf Claim 1.} If  $\Pi( \Gamma)$ is non-virtually solvable, then $\Pi(\Gamma)$ is discrete. On the contrary let us assume that $\Pi(\Gamma)$ is non-discrete, thus by Theorem 1 and Proposition 12 in \cite{Leon} 
		the principal component  of $\overline{\Pi(\Gamma)}$ is  ${\rm PSL}(2,\mathbb{C})$ or conjugate to   ${\rm SO}(3)$ or ${\rm  PSL}(2,\mathbb{R})$.   In the first two cases,  the orbit  of any line passing through $v$ is dense in $\ell$, and this is not possible. In the latter case, let $(g_n)\subset \Pi(G)$ be a sequence such that $\lim_{n\to \infty }g_n=Id$ and $h_n=[g_1,g_n]\neq Id$ for $n\geq 2$. If  $G_m\in \Gamma$ satisfies $\Pi(G_m)=g_m$, then a straightforward computation shows  $\lim_{n\to \infty }[G_1,G_n]=Id$, which is a contradiction.

		{\bf Claim 2.} The group  $\Gamma$ cannot be  virtually  solvable. On the contrary, let us assume that $\Gamma$ is  solvable, then by Theorem 0.1 in \cite{BCNS} 
		the group $\Gamma$ contains a loxodromic element. By Theorems 1.1, 1.2,  1.3 and 1.4  in \cite{Mau2}, 
		there is $ W \subset  \mathbb{C}$
		a discrete additive subgroup  with $rank(W) \leq 2$  and $\alpha,\beta\in \C^* $ such that $\alpha\neq 1$ and $\alpha \beta^2\notin \mathbb{R}$ and  $\Gamma$ is  conjugate to: 
		
		\[
		\left \{
		\begin{bmatrix}
			1 & 0 & 0\\
			0 & 1 & w\\
			0 & 0  &1 
		\end{bmatrix}
		: w\in W
		\right  \}
		\rtimes
		\left  \{
		\begin{bmatrix}
			\alpha^{2n}\beta^n & 0 & 0\\
			0 & \alpha^{n}\beta^{2n}  & 0\\
			0 & 0  &1 
		\end{bmatrix}
		:n\in \mathbb{Z}
		\right \}.
		\]
		Now a simple computation shows $W$ is non-discrete, which is a contradiction.
		
		By the previous claims, we conclude that $\Pi(\Gamma)$ is a  discrete non-elementary with  a non-empty ordinary region. Now the rest of the proof is trivial.
	\end{proof}

	\section{Groups with limit set four lines in general position lines}\label{s:fourlines}
	In this section we provide an algebraic characterization of the subgroups
	of $\PSL(3,\C)$ with exactly four lines lines in general
	position  in their limit set $\Lambda_{\Kul}$ . Of course that by Theorem  \ref{th. number of lines} this implies that $\Lambda_{\Kul}$  actually has infinitely many lines, but only four in general position. The basic reference  is \cite{BCN2}.

	We recall first that an element $A \in \SL(2, \Z)$ is a hyperbolic toral automorphism if none of
	its eigenvalues has norm 1 (see \cite{Ka-Ha}).

	\begin{definition}
		A subgroup $\Gamma$ of $\PSL(3,\C)$ is  a {\it hyperbolic
			toral group} if it is conjugate to a  group of the form:
		$$
		\Gamma_A \, =  \left\{\, \left(
		\begin{array}{lll}
			A^k & b\\
			0 & 1 \\
		\end{array} 
		\right) \,   \; \Big| \; b=  \left(
		\begin{array}{lll}
			b_1 \\
			b_2  \\
		\end{array} 
		\right) \, ,\; b_1, b_2, k \in \Z\, \right\}
		$$
		where  $A \in \SL(2, \Z)$ is a hyperbolic toral automorphism.
	\end{definition}
	
	
	One has:
	
	\begin{theorem}\label{4lines} 
		Let $\Gamma  \subset  \PSL(3,\C)$ be a discrete group. The maximum number of
		complex lines in general position contained in its Kulkarni limit set is equal to four
		if, and only if, $\Gamma$  contains a hyperbolic toral group  whose index is at
		most 8. [In this case the limit set of the subgroup coincides with that of $\Gamma$, and the discontinuity region has four components each one diffeomorphic to $\Hip \times \Hip$, where
		$\Hip$ denotes the half open plane in $\C$.]
	\end{theorem}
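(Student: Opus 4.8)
The plan is to prove the two implications separately, using the machinery of the control group from Section~\ref{s:control} together with the known structure (Theorem~\ref{th. number of lines}) that $\mu(\Omega_{\Kul}(\Gamma)) \in \{1,2,3,4,\infty\}$. For the easy direction, suppose $\Gamma$ contains a hyperbolic toral group $\Gamma_A$ of index at most $8$. A hyperbolic toral automorphism $A \in \SL(2,\Z)$ has two real eigenvalues $\lambda, \lambda^{-1}$ with $|\lambda| \ne 1$, so each nontrivial power $\left(\begin{smallmatrix} A^k & b \\ 0 & 1 \end{smallmatrix}\right)$ with $k \ne 0$ is strongly loxodromic (three eigenvalues of distinct norms $|\lambda|^{|k|}, 1, |\lambda|^{-|k|}$), while the elements with $k=0$, $b \ne 0$ are parabolic (torus-group elements). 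One computes directly that the limit set of $\Gamma_A$ is the union of the four lines joining the four fixed points coming from the two eigendirections of $A$ in the $e_1$–$e_2$ plane and the point $e_3$; these four lines are in general position. Hence $\mu(\Omega_{\Kul}(\Gamma_A)) = 4$. Since $\Gamma_A$ has finite index in $\Gamma$, the two groups share the same limit set (a finite-index subgroup has the same Kulkarni limit set — standard, and used repeatedly in the excerpt), so $\mu(\Omega_{\Kul}(\Gamma)) = 4$. The statement in brackets about the four components diffeomorphic to $\Hip \times \Hip$ is a direct description of $\P^2$ minus the four lines, and is recorded in \cite{BCN2}.

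For the converse, assume $\mu(\Omega_{\Kul}(\Gamma)) = 4$. First I would locate a global fixed point: by Theorem~\ref{th. number of lines} and the analysis of \cite{BCN4}, when there are exactly four lines in general position their six pairwise intersection points, together with the group action permuting them, force (after passing to a finite-index subgroup $\Gamma_0$ of controlled index) the configuration to stabilize, and in particular one obtains a $\Gamma_0$-fixed point $p$ and a $\Gamma_0$-invariant line $\ell$ not through $p$. The key point is that four lines in general position in $\P^2$, dualized, give four points in general position, and the stabilizer of such a configuration inside $\PSL(3,\C)$ is essentially the diagonal torus extended by the symmetric group $S_4$ acting by permutation of coordinates — but here the relevant data is that three of the four lines pass through the distinguished point while the fourth does not, so up to conjugation $\Gamma_0$ consists of block-diagonal matrices $\left(\begin{smallmatrix} M & 0 \\ 0 & 1 \end{smallmatrix}\right)$ with $M \in \GL(2,\C)$, i.e. $\Gamma_0$ is controllable with control group $\Pi(\Gamma_0) \subset \PSL(2,\C)$.

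Then I would analyze the control group. Since the four lines in general position meet $\ell$ in four points permuted by $\Pi(\Gamma_0)$ — and in fact, after a further finite-index reduction, fixed — the control group $\Pi(\Gamma_0)$ preserves a set of (at most) four points on $\ell \cong \P^1$. A discrete subgroup of $\PSL(2,\C)$ with a finite invariant set is elementary, hence virtually cyclic or virtually $\Z^2$ or finite; but the limit set of $\Gamma$ having \emph{four} lines in general position (not two, not a cone) rules out the purely parabolic and the finite cases, forcing $\Pi(\Gamma_0)$ virtually infinite cyclic generated by a loxodromic element with \emph{real} multiplier (the multiplier must be real because the two fixed points on $\ell$ together with the transverse structure and the discreteness of the full group in $\PSL(3,\C)$ impose it — this is where one pins down that $A \in \SL(2,\Z)$). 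Lifting back through $\Pi$, the element downstairs lifts to a block matrix whose $2\times 2$ part is conjugate to a hyperbolic element; combining with the kernel of $\Pi|_{\Gamma_0}$ (which consists of the parabolic $e_3$-fixing elements, forming a torus group $\mathcal T(\mathfrak L)$) and using discreteness to force the lattice $\mathfrak L$ to be $A$-invariant and of rank $2$, one recognizes $\Gamma_0$ as (conjugate to) a hyperbolic toral group $\Gamma_A$. Finally I would bound the index $[\Gamma : \Gamma_A] \le 8$ by tracking how much was lost in the two finite-index reductions: the permutation action on the four lines in general position gives a homomorphism $\Gamma \to S_4$, but only the permutations respecting the ``three-through-$p$ plus one-transverse'' partition survive, which is a subgroup of $S_3 \times S_1 \cong S_3$ — and then one more factor of $2$ comes from the possible swap of the two eigendirections of $A$, i.e. from $\pm$, giving $6 \le 8$ or at worst $8$; the sharp constant $8$ is exactly what is worked out in \cite{BCN2}.

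The main obstacle will be the rigidity step: showing that the control group must be \emph{elementary loxodromic with integral/real multiplier} rather than merely elementary, and simultaneously that the kernel torus lattice is rank $2$ and $A$-invariant. This is where discreteness of $\Gamma$ in $\PSL(3,\C)$ — as opposed to discreteness of the (possibly non-discrete) control group — has to be used essentially, via a commutator argument of the type appearing in Claim~1 of the proof of the previous theorem: if the multiplier were non-real or the lattice degenerate, one produces a sequence $[G_1, G_n] \to \mathrm{Id}$ of distinct elements, contradicting discreteness. I would cite \cite{BCN2} for the detailed bookkeeping and the precise index bound, and \cite{Mau1, Mau2} for the classification of the solvable pieces that appear along the way.
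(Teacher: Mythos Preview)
Your proposal has a genuine geometric error that propagates through both directions.

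In the forward direction you write that the limit set of $\Gamma_A$ ``is the union of the four lines joining the four fixed points \ldots''. This is incorrect: $\Lambda_{\Kul}(\Gamma_A)$ contains \emph{infinitely} many lines (so $\lambda=\infty$) with only four in general position ($\mu=4$). Concretely, the paper shows $\Lambda_{\Kul}(\Gamma_A)$ is the union of \emph{two pencils of lines over two circles}, with two distinct vertices, the pencils sharing the line through the two vertices. The parabolic subgroup $\langle P_1,P_2\rangle$ already produces a pencil over a circle (as with dual torus groups of type~II), and conjugating by powers of $L$ yields the second pencil. Your ``four lines'' picture would give $\lambda=4$, which is excluded by Theorem~\ref{th. number of lines}.

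In the converse you assert that ``three of the four lines pass through the distinguished point while the fourth does not''. But three concurrent lines are \emph{not} in general position, so this contradicts the hypothesis $\mu=4$. The correct picture, and the one the paper uses, is that among the infinitely many lines in $\Lambda_{\Kul}(\Gamma)$ there are exactly \emph{two} vertices, each lying on infinitely many of them; the index bound $8$ then comes from $[\Gamma:\Gamma_0]\le 2$ (the stabilizer of the unordered pair of vertices) times $[\Gamma_0:\Gamma_A]\le 4$ (the stabilizer of one of the four $\Hip\times\Hip$ components of $\Omega_{\Kul}$), not from a permutation action on four lines. Your controllable-group setup with a single fixed point $p$ and block-diagonal form $\left(\begin{smallmatrix} M & 0\\ 0 & 1\end{smallmatrix}\right)$ is therefore off target: the hyperbolic toral group is upper-triangular block $\left(\begin{smallmatrix} A^k & b\\ 0 & 1\end{smallmatrix}\right)$, and the relevant invariant data are a fixed \emph{line} $\overleftrightarrow{e_1,e_2}$ together with the two vertices on it (the eigendirections of $A$), not a fixed point off an invariant line. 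The ``real multiplier'' and ``rank-2 $A$-invariant lattice'' steps you flag as the main obstacle are indeed where the work lies, but they have to be run in this two-vertex framework rather than the one you set up.
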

	
	To prove the theorem above, the first step is showing that
	a toral group $\Gamma_A$ as above is the subgroup of  $\PSL(3,\Z)$ generated by the unipotent parabolic elements $P_1, P_2$ and the strongly loxodromic element $L$ given by:
	$$ P_1 =  \left(
	\begin{array}{lll}
		1 & 0& 0\\
		0 & 1  & 1 \\
		0 & 0 & 1 \\
	\end{array} 
	\right) \; , \; 
	P_2 =  \left(
	\begin{array}{lll}
		1 & 0& 1\\
		0 & 1  & 0 \\
		0 & 0 & 1 \\
	\end{array} 
	\right) \; , \; 
	L =  \left(
	\begin{array}{lll}
		A & 0\\
		0 & 1   \\
	\end{array} 
	\right) \; . \; 
	$$
	Therefore $\Gamma_A$ is discrete.
	The dynamical properties of the hyperbolic toral group are used  to show that 
	$\Lambda _{\textrm{\Kul}}(L)$ is contained in Kulkarni limit set of $\Gamma _A$.
	Also, a straightforward computation shows that $\Lambda _{\textrm{\Kul}}(P_1)$
	is contained in $\Lambda _{\textrm{\Kul}}(\Gamma _A)$. Thus, 
	$$\Lambda _{\textrm{\Kul}}(\Gamma _A)= \overline{\bigcup _{\gamma \in \Gamma _A} \Lambda _{\textrm{\Kul}}(\gamma)},$$
	because both sets in this equation contain three lines in general position
	(see Theorem 1.2 in \cite{BCN1}).
	
	Now, it is possible to compute the sets $\Lambda _{{\Kul}}(\gamma)$ for every
	$\gamma \in \Gamma _{A}$. One finds that $\Lambda _{\textrm{Kul}}(\Gamma _A)$ can be described as 
	a union of two pencils of lines over two distinct circles,  with vertices two distinct points, which share the line determined by the two vertices. Therefore, the maximum number of lines in general
	position contained in $\Lambda _{\textrm{Kul}}(\Gamma _A)$ is equal to four.
	Moreover, this description shows that the Kulkarni discontinuity region of $\Gamma _A$ is a disjoint
	union of four copies of $\Hip \times \Hip$, 
	where $\Hip$ denotes the upper half plane in $\C$. 
	
	Conversely, if we assume that the maximum number of lines in general position contained in 
	the Kulkarni limit set of a group $\Gamma$ is equal to four, then one shows that there are two distinguished points
	(called vertices) contained in infinitely many lines in this limit set.
	The subgroup which stabilizes these two vertices is denoted by $\Gamma _0$ and it has index at most two in $\Gamma$. Furthermore, it can be proved that $\Omega _{\textrm{Kul}}(\Gamma _0)$ has four components, each one diffeomorphic to a copy of $\Hip \times \Hip$. Finally, the subgroup of 
	$\Gamma _0$ stabilizing each of these components has finite index (at most four) in $\Gamma _0$, and it can be shown that it is a hyperbolic toral group.
	
	
	A description of the quotient spaces $\Omega _{\textrm{Kul}}(\Gamma _A)/ \Gamma _A$
	is given by the following theorem (see \cite{BGN}.)
	
	\begin{theorem}\label{solteorema}
		If $\Gamma _A$ is a hyperbolic toral group then
		\begin{enumerate}
			\item The group $\Gamma _A$ is isomorphic to a lattice of the group Sol (see Definition \ref{sol}). 
			\item The quotient space $\Omega _{\textrm{Kul}}(\Gamma _A)/ \Gamma _A$ is a disjoint union of four 
			copies of $$\left(\textrm{Sol}/\Gamma _A \right)\times \mathbb{R}.$$
		\end{enumerate}
	\end{theorem}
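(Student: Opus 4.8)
The plan is to prove the two statements in turn, since the proof of (2) reuses the lattice embedding built for (1). For (1), reading off the multiplication law $\left(\begin{smallmatrix}A^{k}&b\\0&1\end{smallmatrix}\right)\left(\begin{smallmatrix}A^{l}&c\\0&1\end{smallmatrix}\right)=\left(\begin{smallmatrix}A^{k+l}&A^{k}c+b\\0&1\end{smallmatrix}\right)$ shows $\Gamma_{A}\cong\mathbb{Z}^{2}\rtimes_{A}\mathbb{Z}$. I would diagonalise $A$ over $\mathbb{R}$, say $A=PDP^{-1}$ with $D=\mathrm{diag}(\lambda,\lambda^{-1})$, and carry out the argument for $\lambda>1$ (the case of negative eigenvalues is analogous, with $\mathrm{Sol}$ replaced by $\mathrm{Isom}(\mathrm{Sol})$; this is handled in \cite{BGN}). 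Realising $\mathrm{Sol}$ as $\mathbb{R}^{2}\rtimes_{\psi}\mathbb{R}$ with $\psi_{t}=\mathrm{diag}(e^{t},e^{-t})$, I would set $f\colon\Gamma_{A}\to\mathrm{Sol}$, $\left(\begin{smallmatrix}A^{k}&b\\0&1\end{smallmatrix}\right)\mapsto(P^{-1}b,\,k\log\lambda)$. Using $P^{-1}A^{k}=D^{k}P^{-1}$ one checks that $f$ is an injective homomorphism; its image is discrete (a null sequence forces $k=0$, then $b=0$), and the homomorphism $\mathrm{Sol}\to\mathrm{Sol}/\mathbb{R}^{2}=\mathbb{R}$ exhibits $\mathrm{Sol}/f(\Gamma_{A})$ as a bundle over $\mathbb{R}/(\log\lambda)\mathbb{Z}$ with fibre the torus $\mathbb{R}^{2}/P^{-1}\mathbb{Z}^{2}$, hence compact. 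So $f$ identifies $\Gamma_{A}$ with a cocompact lattice in $\mathrm{Sol}$; equivalently, this is the classical fact that a torus bundle over $S^{1}$ with Anosov monodromy carries a $\mathrm{Sol}$ geometry.

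For (2), the first task is to make the four components of $\Omega_{\Kul}(\Gamma_{A})$ explicit. As recalled before the statement (see \cite{BGN}), $\Lambda_{\Kul}(\Gamma_{A})$ is a union of two pencils of complex lines over two circles with distinct vertices, sharing the line through those vertices. Passing to the affine chart $\{z_{3}\neq0\}\cong\mathbb{C}^{2}$ and to the eigencoordinates $(\zeta_{+},\zeta_{-})$ of $A$ (so that $(z_{1},z_{2})^{T}=P(\zeta_{+},\zeta_{-})^{T}$), every $\gamma\in\Gamma_{A}$ acts by $(\zeta_{+},\zeta_{-})\longmapsto(\lambda^{k}\zeta_{+}+\beta_{+},\ \lambda^{-k}\zeta_{-}+\beta_{-})$ with $k\in\mathbb{Z}$ and $(\beta_{+},\beta_{-})=P^{-1}b\in P^{-1}\mathbb{Z}^{2}$; one computes that $\Lambda_{\Kul}(\Gamma_{A})$ meets this chart precisely in $\{\mathrm{Im}\,\zeta_{+}=0\}\cup\{\mathrm{Im}\,\zeta_{-}=0\}$, the remaining shared line being the line at infinity. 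Hence the four components of $\Omega_{\Kul}(\Gamma_{A})$ are the quadrants $\Omega_{\epsilon_{+},\epsilon_{-}}=\{\epsilon_{+}\,\mathrm{Im}\,\zeta_{+}>0,\ \epsilon_{-}\,\mathrm{Im}\,\zeta_{-}>0\}$ with $\epsilon_{+},\epsilon_{-}\in\{1,-1\}$, each a copy of $\Hip\times\Hip$; and since $\lambda^{\pm k}>0$ and the $\beta_{\pm}$ are real, $\Gamma_{A}$ preserves each quadrant. Thus $\Omega_{\Kul}(\Gamma_{A})/\Gamma_{A}=\bigsqcup_{\epsilon_{+},\epsilon_{-}}\Omega_{\epsilon_{+},\epsilon_{-}}/\Gamma_{A}$, and it suffices to identify one summand.

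The heart of the matter is then to exhibit, on $\Omega_{+,+}$, the diffeomorphism $\Phi\colon\Omega_{+,+}\to\mathrm{Sol}\times\mathbb{R}$, $\Phi(\zeta_{+},\zeta_{-})=\big((\mathrm{Re}\,\zeta_{+},\,\mathrm{Re}\,\zeta_{-},\,\log\mathrm{Im}\,\zeta_{+}),\ \log\mathrm{Im}\,\zeta_{+}+\log\mathrm{Im}\,\zeta_{-}\big)$, whose inverse sends $\big((\xi_{+},\xi_{-},s),r\big)$ to $(\xi_{+}+ie^{s},\,\xi_{-}+ie^{\,r-s})$. The last coordinate $r=\log\mathrm{Im}\,\zeta_{+}+\log\mathrm{Im}\,\zeta_{-}$ is $\Gamma_{A}$-invariant — the factors $\lambda^{k}$ and $\lambda^{-k}$ cancel, and translations do not change imaginary parts — while on the $\mathrm{Sol}$-factor the element $\gamma$ sends $(\xi_{+},\xi_{-},s)$ to $(\lambda^{k}\xi_{+}+\beta_{+},\,\lambda^{-k}\xi_{-}+\beta_{-},\,s+k\log\lambda)$, which is precisely left translation by $f(\gamma)$. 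So through $\Phi$ the $\Gamma_{A}$-action on $\mathrm{Sol}\times\mathbb{R}$ is left translation via $f$ on the $\mathrm{Sol}$-factor and trivial on $\mathbb{R}$. The action on $\Omega_{\Kul}(\Gamma_{A})$ is properly discontinuous by \cite{Kulkarni} and free (the displayed formula shows that a fixed point forces $k=0$, hence $b=0$), so the quotient is a manifold and $\Omega_{+,+}/\Gamma_{A}\cong(\mathrm{Sol}/f(\Gamma_{A}))\times\mathbb{R}=(\mathrm{Sol}/\Gamma_{A})\times\mathbb{R}$. The identical computation (up to complex conjugation in the relevant coordinate) applies to the other three quadrants, which yields (2).

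The step I expect to be most delicate is not the product coordinates above — these fall out once one has passed to the eigencoordinates of $A$ and spotted the invariant function $r$ — but the precise description of $\Lambda_{\Kul}(\Gamma_{A})$: that each pencil is a pencil over a \emph{line} (a circle through infinity), and that $\Gamma_{A}$ fixes each of the four components. This genuinely uses the dynamics of hyperbolic toral groups, and I would quote it from \cite{BGN}. A second delicate point is the sign bookkeeping in (1): for $A$ with negative eigenvalues, $\Gamma_{A}$ sits as a lattice in $\mathrm{Isom}(\mathrm{Sol})$ rather than in $\mathrm{Sol}$ itself, and it then permutes the four components in two pairs, so the statement must be read with that convention (again, the details are in \cite{BGN}).
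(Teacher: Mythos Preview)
Your proposal is correct and follows essentially the same approach as the paper: the paper's proof sketch (citing \cite{BGN}) consists precisely of exhibiting a $\Gamma_A$-equivariant diffeomorphism $\Hip\times\Hip\cong\mathrm{Sol}\times\mathbb{R}$ coming from a smooth foliation of $\Hip\times\Hip$ with leaves diffeomorphic to $\mathrm{Sol}$, and your explicit map $\Phi$ (with the invariant coordinate $r=\log\mathrm{Im}\,\zeta_{+}+\log\mathrm{Im}\,\zeta_{-}$ cutting out the leaves) is exactly such a diffeomorphism. Your treatment of part~(1) via the embedding $f$ and your caveats about negative eigenvalues and the description of $\Lambda_{\Kul}(\Gamma_A)$ are likewise in line with \cite{BGN}.
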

	
	\begin{definition}\label{sol}
		The group Sol is $\mathbb{R}^3$ equipped with the operation
		$$\left( 
		\left( 
		\begin{array}{c}
			x_1 \\
			y_1
		\end{array}
		\right), t_1 \right)
		\left( 
		\left( 
		\begin{array}{c}
			x_2 \\
			y_2
		\end{array}
		\right), t_2 \right)
		=
		\left( 
		\left( 
		\begin{array}{c}
			x_1 +e^{t_1}x_2\\
			y_1+e^{-t_1}y_2
		\end{array}
		\right), t_1+t_2 \right)$$
	\end{definition}
	
	The proof of \ref{solteorema} is done by giving an explicit smooth foliation of 
	$\Hip \times \Hip$ where the leaves are diffeomorphic to Sol, and by showing that 
	$\Hip \times \Hip$ is diffeomorphic to $\textrm{Sol} \times \mathbb{R}$ by a 
	$\Gamma _A$-equivariant diffeomorphism. 
	
	It can be shown that $\Gamma_ A$ is isomorphic to $\mathbb{Z}^2 \rtimes _A \mathbb{Z}$, and this group is determined by the conjugacy class in $\textrm{GL}(2, \mathbb{Z})$ of $A$ (see \cite{DeLaHarpe}
	pages 96,97). It follows from Theorem \ref{4lines} that there is a countable number of
	nonisomorphic Complex Kleinian groups such that the maximum number of lines in general position contained in its Kulkarni limit set is equal to four.

	
	
	\section{Groups with limit set having an isolated point}\label{s:isolated}
	
	Let us suppose now that $\Gamma\subset {\rm PSL}(3,\mathbb{C})$ is a group with at least an isolated point in its limit set. Then the results explained in this article show that $\Gamma$ must have only one line in its limit set, for otherwise $\Lambda_{\Kul}$ is a union of lines.
	
	Let $\ell$ be the unique line in $\Lambda_{\Kul}(\Gamma)$ and let $p\in \Lambda_{\Kul}(\Gamma)$ be an isolated point. Then $\ell$ is an invariant set and we look at the action of $\Gamma$ on $\ell$ to deduce from it consequences on the global dynamics of $\Gamma$.

	We note that this  is somehow similar to the controllable groups, where we have a global fix point and a line $L$ that we call the horizon, which is not invariant but we get an action on it. Then we know a lot about the action of the group on $\P^2$ by looking at the induced action on $L$. In the case we now envisage, we have the invariant line $\ell$ and the action on it says a lot about the dynamics of the group on $\P^2$. 
	There are two possibilities as  
	the action of $\Gamma$ on $\ell$ can be either:
	
	\begin{enumerate}	
		\item[a.] Virtually Solvable.
		
		\item [b.] Non- Virtually solvable.

	\end{enumerate}

	The idea of 
	the proof consists in showing first that the non-virtually solvable case cannot happen, so  the action must be virtually solvable. This uses the Tits Alternative. 
	
	In the solvable case, the dynamics implies that on the invariant line $\ell$ we must have either one or two fixed points. Then one shows that in the first case, when there is only one fixed point, one actually has an invariant flag and up to conjugation  the elements are all of the form
	
	\[
	\begin{bmatrix}
		\alpha &  0 & 0  \\
		0 & \beta & \delta  \\
		0 & 0 & \gamma 
	\end{bmatrix} \;.
	\]
	
	In the second case we find that, up to conjugation, all the elements are diagonal, and the group consists of lodrodromic elements that are screws (see Section \ref {subsec. elements}), so the limit set of each element consists of a point and a line. Then we show that the limit set of the whole group is  a point and a line.

	The above conisderations lead to the following:
	
	\begin{theorem}
		Let $\Gamma\subset {\rm PSL}(3,\mathbb{C})$ be a group with at least an isolated point in its limit set. Then the limit set  consists of exactly one line and one point, and  $\Gamma$ is  conjugate to either:  
		\begin{enumerate}
			\item 
			\[
			G_0=
			\left \{
			{\rm diag}\,[
			\alpha^n, e^{2\pi i\theta}, e^{-2\pi i \theta}]: n,m\in \mathbb{Z} 
			\right \}
			\]
			where $\theta \in \mathbb{R}$ and  $\alpha \in \mathbb{C}^*$  is  non-unitary complex number.
			\item or 
			\[
			H_{0}=
			\left \{
			\begin{bmatrix}
				\mu(w)^{-2}  & 0 &0\\
				0&	\mu(w)& w\mu(w)   \\
				0 &0 & \mu(w) \\
				
			\end{bmatrix}: w\in W\right \}
			\]
			where $W \subset \mathbb{C}$ is a discrete additive group and $\mu:(\mathbb{C}, +) \rightarrow (\mathbb{C}^*, \cdot)$ 
			is a group morphism satisfying that for every sequence of
			distinct elements $(w_n) \subset  W$, the limit $\lim_{n\to \infty }\mu(w_n)$ is either $0$ or $\infty$.
			
		\end{enumerate}
		
	\end{theorem}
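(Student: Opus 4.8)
The plan is to carry out the two-step reduction sketched above. First, since $\Gamma$ has an isolated point $p$ in its Kulkarni limit set, statement (v) of Theorem~\ref{th. number of lines} gives that $\Lambda_{\Kul}(\Gamma)$ is the disjoint union of $p$ and a single complex projective line $\ell$, with $p\notin\ell$; this already yields the first assertion of the theorem, so the real content is the normal forms. Because $\Lambda_{\Kul}(\Gamma)$ is closed and $\Gamma$-invariant while $\ell$ is its only line and $p$ its only isolated point, both $\ell$ and $p$ are $\Gamma$-invariant. Fixing coordinates with $p=[e_1]$ and $\ell=\overleftrightarrow{[e_2],[e_3]}$, every element of $\Gamma$ has a block-diagonal lift $\begin{bmatrix}a&0\\0&B\end{bmatrix}$, and restriction to $\ell$ defines a homomorphism $\rho\colon\Gamma\to\PSL(2,\C)$, $[\text{lift}]\mapsto[B]$, which coincides with the control morphism $\Pi_{p,\ell}$ of Section~\ref{s:control} since $\ell$ is $\Gamma$-invariant.

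The second step is to rule out the possibility that $\rho(\Gamma)$ is not virtually solvable, using the Tits alternative exactly as in the proof of the theorem on groups with limit set three lines. If $\rho(\Gamma)$ were non-discrete and not virtually solvable, the identity component of $\overline{\rho(\Gamma)}$ would be conjugate to $\PSL(2,\C)$, to $\SO(3)$, or to $\PSL(2,\mathbb R)$; in the first two cases the $\rho(\Gamma)$-orbit of a point of $\ell$ is dense in $\ell$, forcing uncountably many lines through $p$ into $\Lambda_{\Kul}(\Gamma)$, and in the third case one produces a sequence of nontrivial commutators $[G_1,G_n]\in\Gamma$ converging to the identity, both impossible. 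If $\rho(\Gamma)$ were discrete and non-elementary, then $\Gamma$ would be a suspension of $\rho(\Gamma)$ over the fixed point $p$ and, arguing as in that same theorem, $\Lambda_{\Kul}(\Gamma)$ would contain the cone $\bigcup_{z\in\Lambda(\rho(\Gamma))}\overleftrightarrow{p,z}$ of infinitely many lines, again contradicting $\Lambda_{\Kul}(\Gamma)=\ell\sqcup\{p\}$. Hence $\rho(\Gamma)$ is virtually solvable, and after replacing $\Gamma$ by a finite-index subgroup (which alters neither the limit set nor the conclusion up to conjugacy) the group $\rho(\Gamma)$ either fixes exactly one point of $\ell$ or fixes a pair of points of $\ell$ (the degenerate case $\rho(\Gamma)$ finite reduces $\Gamma$ to a virtually cyclic group of complex homotheties, a special case of the first family below).

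It then remains to analyse the two cases. \textbf{Case 1: $\rho(\Gamma)$ fixes a single point $q\in\ell$.} Then $\Gamma$ preserves the flag $\{q\}\subset\ell$ while fixing $p$, so after conjugation every element of $\Gamma$ is of the form $\begin{bmatrix}\alpha&0&0\\0&\beta&\delta\\0&0&\gamma\end{bmatrix}$. Imposing discreteness together with the requirement that the limit set be $\ell\sqcup\{p\}$ — which constrains the conjugacy type of each nontrivial element (in particular none may be strongly loxodromic) and forces each nontrivial element's own Kulkarni limit set to lie in $\ell\cup\{p\}$ — one reads off, after a further conjugation, the normal form $H_0$, with the indicated discrete subgroup $W\subset\C$ and group morphism $\mu$. \textbf{Case 2: $\rho(\Gamma)$ fixes two points of $\ell$.} Then $\Gamma$ fixes three points of $\P^2$ in general position, hence is conjugate to a group of diagonal matrices; discreteness rules out infinite-order elliptic elements, and the shape of the limit set rules out strongly loxodromic ones, so every nontrivial element is a complex homothety or a screw with common fixed line $\ell$ and common fixed point $p$, which forces the normal form $G_0$. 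In both cases one closes the argument by verifying directly that the Kulkarni limit set of the displayed group is exactly $\ell$ together with $p$.

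I expect the main obstacle to be the non-virtually-solvable step: one must show cleanly that a genuine suspension of a non-elementary Kleinian group over a fixed point cannot create an isolated point in the Kulkarni limit set — it creates a cone of lines — and dovetail this with the non-discrete sub-case through the commutator estimate. The ensuing parameter analysis in Case~1 is more routine but still delicate: one must determine exactly which triangular groups preserve a limit set equal to a line and a point, and pin down the precise condition on the character $\mu$ (that $\mu(w_n)\to 0$ or $\infty$ along every sequence of distinct elements $w_n\in W$), which is what excludes the nearby configurations whose limit set would contain a second line or be all of $\P^2$.
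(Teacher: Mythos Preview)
Your overall two-step reduction (rule out non-virtual-solvability of the action on $\ell$, then analyse the solvable case) matches the paper's architecture, but there are two genuine problems in the execution. First, invoking statement~(v) of Theorem~\ref{th. number of lines} is circular: the paper explicitly says (v) is not in the literature and is to be established here, so you cannot use it to deduce $\Lambda_{\Kul}(\Gamma)=\ell\sqcup\{p\}$ (and hence that $p$ is a global fixed point and the lifts are block-diagonal). The paper instead reaches $\mu(\Omega_{\Kul}(\Gamma))=1$ via \cite{BCN2} and Theorem~\ref{t:2lines}, and then works only with the invariance of $\ell$.

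Second, and more seriously, your elimination of the non-virtually-solvable case does not go through. The density argument you borrow from the three-lines theorem needs a line through the vertex already sitting in $\Lambda_{\Kul}$ in order to propagate; here the only line in $\Lambda_{\Kul}$ is $\ell$, which does \emph{not} pass through $p$, so ``dense orbits on $\ell$'' gives you nothing to push into the limit set. Likewise your ``suspension/cone'' claim in the discrete non-elementary sub-case is asserted rather than proved. The paper circumvents all of this with a single uniform device: using Tits' alternative (and Schottky groups in the discrete case) it finds $g_1,g_2\in\Gamma$ whose restrictions to $\ell$ generate a rank-two free group that is purely loxodromic or purely elliptic; the commutator $h=[g_1,g_2]$ then has a lift with $(1,1)$-entry equal to $1$ while $h|_\ell$ is nontrivial, so $h$ is either elliptic of infinite order (impossible by discreteness) or strongly loxodromic with attracting and repelling fixed points on $\ell$ and saddle off $\ell$. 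The $\lambda$-Lemma then forces the attracting and repelling lines of $h$, both distinct from $\ell$, into $\Lambda_{\Kul}(\Gamma)$, contradicting $\mu=1$. This commutator~$+$~$\lambda$-Lemma step is the key idea you are missing; once it is in place, the virtually solvable case is finished by appealing to the classification in \cite{Mau2}, rather than by the direct flag/diagonal analysis you sketch.
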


	\begin{proof}  
		By Theorem 1.2 in \cite{BCN2} 
		we have $\mu(\Omega_{\Kul}(\Gamma))\leq 2$, and from Theorem \ref{t:2lines} we conclude $\mu(\Omega_{\Kul}(\Gamma))=1$.   On the other hand,  by Theorem 0.1 in \cite{BCNS}, 
		the group $\Gamma$ contains a Loxodromic element. Let $\ell$ be the unique line in $\Lambda_{\Kul}(\Gamma)$, $p\in \Lambda_{\Kul}(\Gamma)$ be an isolated point and  $\gamma\in\Gamma$ a loxodromic element. We must consider the following cases:

		{\bf Case 1.} The  action of $\Gamma$ restricted to $\ell$ is given by a  virtually solvable  group. In this case the group $\Gamma$ is virtually  solvable and 
		the theorem follows  from Claim 2 and Theorems 1,1, 1.2, 1.3 and 1.4 in \cite{Mau2}, 
		see also section  \ref{sec 1} above .  
		
		{\bf Case 2.} The  action of $\Gamma$ restricted to $\ell$ is  given by a  non- virtually solvable  group.  Under this assumption we claim:
		
		{\bf Claim}.  There are  $g_1,g_2\in \Gamma$ such that  the group generated by $\{\gamma_1\vert_\ell , \gamma_2\vert_\ell\}$  is a rank two free group which is either purely loxodromic or purely elliptic. If the action of $\Gamma$ on $\ell$ is given by a discrete group $H$, then  it is well know that  $H$ contains  a rank two Shottky group, see Chapter X in 
		\cite{Ma}, 
		these will do the job.  
		If the action on $\ell$ is given by a non discrete group $G$,   thus by Theorem 1 and Proposition 12 in 
		\cite{Leon} 
		the principal component  of $\overline{G}$ is  ${\rm PSL}(2,\mathbb{C})$,  or conjugate to   ${\rm SO}(3)$ or ${\rm  PSL}(2,\mathbb{R})$. In any such case  we deduce that $G$ contains a non-virtualy  solvable finitelly generated subgroup,  as a consequence of Tit's alternative  we deduce that   $G$   contains  a rank two free subgroup which is either purely loxodromic or purely elliptic. Which proves the claim.\\

		Now let  $g_1,g_2$ be the elements given by the previous claim. After conjugating with a projective transformation we can assume that $\ell=\overleftrightarrow{[e_2],[e_3]}$ so $h=[g_1,g_2]$ is a loxodromic element and 
		$$
		h=
		\begin{bmatrix}
			1 & 0 &0\\
			c & d & e\\
			f & g & h
		\end{bmatrix}
		$$

		where $dh-eg=1$ and $(d+h)^2\in \mathbb{C}-\{4\}$. Thus $h$ is either   an  elliptic element of infinite order,  which cannot happen by the  discreteness of $\Gamma$, or    a strongly loxodromic  element whose attracting 
		and repelling points are contained in $\ell$. We conclude that the attracting and  repelling  lines of $h$ are distinct from $\ell$. On the other hand, from the $\lambda$-Lemma  (see Lemma 7.3.6 in \cite{CNS}) 
		we know that the attracting and  repelling  line of $h$ are both contained in $\Lambda_{\Kul}(\Gamma)$, which is a contradiction.
	\end{proof}
	

	\bibliographystyle{amsplain}

\end{document}